\documentclass[12pt]{article}

\usepackage{amsthm}
\usepackage{amssymb}
\usepackage{amsmath}
\usepackage{comment}
\usepackage{url} 
\usepackage{hyperref}
\usepackage[noabbrev,capitalise]{cleveref}

\usepackage{color}
\usepackage[normalem]{ulem}


\newcommand{\mc}[1]{\mathcal{#1}}

\renewcommand{\v}{\textup{\textsf{v}}}
\newcommand{\e}{\textup{\textsf{e}}}
\renewcommand{\d}{\textup{\textsf{d}}}

\theoremstyle{definition}

\theoremstyle{plain}
\newtheorem{thm}{Theorem}[section]
\newtheorem{lem}[thm]{Lemma}

\newtheorem{cor}[thm]{Corollary}

\newtheorem{conj}[thm]{Conjecture}


\title{A new upper bound on the chromatic number of graphs with no odd $K_t$ minor}

\author{
Sergey Norin\thanks{Department of Mathematics and Statistics, McGill University. E-mail: {\tt sergey.norin@mcgill.ca}. Supported by an NSERC grant.}\and \and
Zi-Xia Song\thanks{Department of Mathematics, University of Central Florida, Orlando, FL 32816, USA. E-mail: {\tt Zixia.Song@ucf.edu}. Supported by the National Science Foundation under Grant No. DMS-1854903.}}

\begin{document}

\maketitle

\begin{abstract}  Gerards and Seymour conjectured that every graph with no odd $K_t$ minor is $(t-1)$-colorable. This is a strengthening of the famous Hadwiger's Conjecture.
Geelen et al. proved that every graph with no odd $K_t$ minor is $O(t\sqrt{\log t})$-colorable. Using  the  methods the present authors and Postle recently developed for coloring graphs with no $K_t$ minor, we make the first improvement on  this bound by showing  that  every graph with no odd $K_t$ minor is $O(t(\log t)^{\beta})$-colorable for every $\beta > 1/4$.

\end{abstract}

\section{Introduction}

All graphs in this paper are finite, and have no loops or parallel edges.  Given graphs $G$ and $H$, we say that $G$ has \emph{an $H$ minor} if a graph isomorphic to $H$ can be obtained from a subgraph $G'$ of $G$ by contracting edges; and   $G$ has \emph{an odd $H$ minor} if, in addition, the set of contracted edges forms a cut in $G'$. (Note that the empty set $\emptyset$ is a cut.) We denote the complete graph on $t$ vertices by $K_t$. 

Gerards and Seymour in 1993  (see \cite[Section 6.5]{JenToft95}) made   the 
following conjecture.
\begin{conj}[Odd Hadwiger's Conjecture]
\label{odd} 
For every integer  $t \geq 1$, every graph 
with no odd  $K_{t}$ minor  is  $(t-1)$-colorable. 
\end{conj}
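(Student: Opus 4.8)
The plan is to follow the structural program that underlies most progress on Hadwiger-type statements: induct on $t$, reduce to a highly connected minimal counterexample, invoke a structure theorem for the excluded-minor class, and color each structural piece separately. The base cases are available: the conjecture is trivial for $t \le 2$, is exactly the statement that graphs with no odd $K_3$ minor are bipartite when $t = 3$, holds by a short argument for $t = 4$, and holds for $t = 5$, where it is essentially equivalent to the Four Color Theorem. So assume $t \ge 6$ and let $G$ be a graph with no odd $K_t$ minor that is not $(t-1)$-colorable, with $|V(G)|$ minimum. Deleting a vertex of degree at most $t-2$ and choosing its color last (valid since subgraphs of $G$ inherit the absence of an odd $K_t$ minor) shows $G$ has minimum degree at least $t-1$. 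Likewise, a separation of order at most $t-2$ that cannot be used to route an odd $K_t$ minor behaves like an odd clique-sum, so one colors the two sides by induction and glues them, permuting colors and tracking the parity labels on the contracted subgraphs; iterating, one may assume $G$ is $(t-1)$-connected (and, with more work, internally highly connected).

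The core step is to apply the approximate structure theorem of Geelen, Gerards, Reed, Seymour and Vetta for graphs with no odd $K_t$ minor: there is a constant $N = N(t)$ and a set $Z \subseteq V(G)$ with $|Z| \le N$ such that $G - Z$ has a tree-decomposition of adhesion at most $N$ in which every torso is either \emph{almost bipartite} --- it has a set of at most $N$ vertices whose deletion leaves a bipartite graph --- or \emph{almost embeddable} in a surface of Euler genus at most $N$ after deleting at most $N$ apex vertices and attaching at most $N$ vortices of depth at most $N$. One then colors $G$ from the leaves of the tree-decomposition upward: an almost-bipartite torso needs $2 + N$ colors; an almost-embeddable torso needs $O(\sqrt{N})$ colors from the Heawood bound plus $O(N)$ for apices and vortex boundaries; the adhesion sets become cliques in the torsos, so at most $N$ colors are frozen at each gluing; and $Z$ costs a final $N$. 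This yields a bound of the form $O(N(t))$ on $\chi(G)$.

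The obstacle --- and the reason this is the famous Odd Hadwiger Conjecture rather than a routine exercise --- is that the gap between $O(N(t))$ and the target $t-1$ is enormous: the bounds $N(t)$ in the current proofs of the odd-minor structure theorem are tower-type in $t$, so this route actually produces a far \emph{worse} bound than the $O(t\sqrt{\log t})$ already known from density arguments, let alone the exact value. Closing the gap would require two new ingredients. First, an \emph{economical} odd-minor structure theorem in which the apex count is at most about $t-3$ and the genus is tied to the Heawood deficiency, so that each structural piece is genuinely $(t-1)$-colorable rather than merely $O(N)$-colorable. Second, a color-merging lemma that wastes no color when combining a bipartite part, a bounded-genus part, and the apex and vortex sets across odd clique-sums. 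The first ingredient is exactly what is missing even for the ordinary Hadwiger Conjecture (open for all $t \ge 7$), and the odd variant additionally demands that the parity of every contracted subgraph be controlled throughout. So the honest shape of the proposal is: (a) settle further small cases $t = 6, 7, \dots$ using refined ad hoc structure; (b) prove an apex-economical odd-$K_t$-minor structure theorem; and (c) combine it with the color-merging lemma. Step (b) is the main obstacle, and no current technique brings the structural error terms down to the range the exact conjecture requires.
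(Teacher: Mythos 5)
The statement you are asked to prove is the Odd Hadwiger Conjecture itself, which the paper states only as a conjecture: it explicitly records that the cases $t\le 4$ are known, that the case $t=5$ was announced by Guenin but remains unpublished, and that the conjecture is \emph{open} for all $t\ge 6$. The paper does not prove this statement and does not claim to; its actual contribution is the much weaker Theorem 1.5, that graphs with no odd $K_t$ minor are $O(t(\log t)^{\beta})$-colorable for $\beta>1/4$, obtained by a density/linkage argument (weak connectivity reduction, bipartite clique minors from dense pieces, conversion of bipartite minors to odd minors) rather than by any structure theorem. So your proposal cannot be judged as matching or diverging from "the paper's proof" --- there is none.

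As a proof attempt, your text contains a genuine and fatal gap, which to your credit you identify yourself: everything rests on step (b), an ``apex-economical'' odd-minor structure theorem with error terms of order $t$ rather than tower-type in $t$, and no such theorem exists. Without it, the route through the Geelen--Gerards--Reed--Seymour--Vetta structure theorem yields a bound of the form $O(N(t))$ with $N(t)$ astronomically larger than $t-1$, which is worse than the bounds already known. The earlier reductions also have unproved steps: the claim that a separation of order at most $t-2$ ``behaves like an odd clique-sum'' so that colorings of the two sides can be glued requires controlling the parities of the contracted branch sets across the separation, and this is precisely where odd-minor arguments become delicate; you do not supply that argument. Finally, two factual corrections: the $t=5$ case is not settled in the published literature (the paper cites it only as announced), and the odd $K_5$ case is strictly stronger than, not ``essentially equivalent to,'' the Four Color Theorem. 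What you have written is a reasonable survey of why the conjecture is hard, not a proof of it.
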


Conjecture~\ref{odd} substantially strengthens the famous  Hadwiger's Conjecture~\cite{Had43} which states that every graph with no $K_{t}$ minor is $(t-1)$-colorable. Conjecture \ref{odd} is trivially true  for  $t\le 3$. 
The case $t=4$  was proved by Catlin~\cite{Cat78}.   Guenin (see \cite{Sey16Survey}) announced the proof for
 the case $t=5$, but it has not yet been published.
Conjecture~\ref{odd} remains  open for  all $t \geq 6$. We refer the reader to a recent survey by Seymour~\cite{Sey16Survey} for further background.

The general upper bound  on the number of colors sufficient to properly color  graphs with no odd $K_{t}$ minor
was established by Geelen, Gerards, Reed, Seymour and Vetta~\cite{GGRSV08}, who proved the following. 

\begin{thm}[\cite{GGRSV08}]\label{t:GGRS} For every integer  $t \geq 1$,  every graph with no odd $K_t$ minor is $O(t \sqrt{\log t})$-colorable.
\end{thm}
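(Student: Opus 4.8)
The plan is to reduce the statement to the classical theorem of Kostochka and Thomason that every graph with no $K_s$ minor is $O(s\sqrt{\log s})$-colourable, by confining the ``oddness'' obstruction to a small monochromatic subgraph. Since $\chi(G)$ is the maximum of $\chi$ over the components of $G$ and every subgraph of $G$ still has no odd $K_t$ minor, I may assume $G$ is connected. Fix a depth-first search tree $T$ of $G$ rooted at some $r$, so that every non-tree edge joins a vertex to one of its $T$-ancestors, and let $c\colon V(G)\to\{0,1\}$ record the parity of the $T$-depth. Write $G_{\mathrm b}$ and $G_{\mathrm m}$ for the spanning subgraphs of $c$-bichromatic and $c$-monochromatic edges. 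Two observations drive the argument. First, $G_{\mathrm b}$ contains $T$, hence is connected, and $c$ properly $2$-colours every connected subgraph of $G_{\mathrm b}$; meanwhile $G_{\mathrm m}$ consists exactly of the back edges whose ends have $T$-depths of equal parity. Second, for \emph{any} $2$-colouring $c$ one has $\chi(G)\le 2\chi(G_{\mathrm m})$: if $\varphi$ properly colours $G_{\mathrm m}$ then $v\mapsto(c(v),\varphi(v))$ properly colours $G$, since bichromatic edges are separated by the first coordinate and monochromatic edges by the second.

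With this in place, the whole theorem follows from the claim that \emph{$G_{\mathrm m}$ has no $K_t$ minor} (ruling out a $K_{Ct}$ minor for an absolute constant $C$ would be just as good): by Kostochka--Thomason $\chi(G_{\mathrm m})=O(t\sqrt{\log t})$, and then $\chi(G)=O(t\sqrt{\log t})$. To prove the claim I would argue by contradiction, starting from a $K_t$ minor in $G_{\mathrm m}$ with branch sets $Y_1,\dots,Y_t$, and produce an odd $K_t$ minor of $G$. The crucial point is that each $Y_i$ is connected using only monochromatic back edges, which are ancestor--descendant pairs in $T$; a short induction then shows that all of $Y_i$ lies in the subtree $T_{w_i}$ of $T$ rooted at the shallowest vertex $w_i\in Y_i$. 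Replace $Y_i$ by the minimal subtree $B_i$ of $T$ spanning $Y_i$, so $B_i\subseteq T_{w_i}$. Each $B_i$ lies inside $G_{\mathrm b}$, hence is connected and is properly $2$-coloured by $c$, and $Y_i\subseteq B_i$, so for each $i\neq j$ there is still a monochromatic edge of $G$ between $B_i$ and $B_j$. Were $B_1,\dots,B_t$ pairwise disjoint, contracting the tree edges of all the $B_i$ would give $K_t$, and the contracted edge set would be the cut $\delta_{G'}(\{v:c(v)=0\})$ in the subgraph $G'$ formed by the $B_i$ together with one monochromatic linking edge per pair; so this would be an odd $K_t$ minor of $G$, a contradiction.

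Hence the main obstacle is disjointness of the $B_i$: nothing so far prevents two of the shallowest vertices $w_i,w_j$ from being comparable in $T$, so that the subtrees $T_{w_i},T_{w_j}$ nest rather than being disjoint. This is the step I expect to require real work. I would attack it using the laminar, interval-like structure of back edges in a depth-first search tree, together with minimality of the chosen $K_t$ minor, to rearrange or locally reroute the branch sets so that their supports become an antichain of subtrees of $T$, allowing a bounded blow-up from $t$ to $Ct$ (which is harmless for the final bound). Once disjointness is secured, the construction above yields the odd $K_t$ minor, proving that $G_{\mathrm m}$ has no $K_{Ct}$ minor, and $\chi(G)\le 2\chi(G_{\mathrm m})=O(t\sqrt{\log t})$ then finishes the proof.
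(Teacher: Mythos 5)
There is a genuine gap, and it sits exactly where you flag it: the disjointness of the trees $B_1,\dots,B_t$. Everything up to that point is fine (the product-colouring inequality $\chi(G)\le 2\chi(G_{\mathrm m})$, the fact that monochromatic edges are ancestor--descendant back edges, and the observation that each $Y_i$ lies in $T_{w_i}$), and it is also true that \emph{if} the $B_i$ were pairwise disjoint, contracting their tree edges would give an odd $K_t$ minor, since all tree edges are $c$-bichromatic and all linking edges are $c$-monochromatic. But the shallowest vertices $w_1,\dots,w_t$ can all lie on a single root-to-leaf path of $T$, so the subtrees $T_{w_i}$ nest arbitrarily deeply and the minimal spanning subtrees $B_i$ can pairwise intersect; the branch sets $Y_i$ themselves need not be connectable inside $T$ while avoiding one another. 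The proposed fix --- ``rearrange or locally reroute using the laminar structure of back edges, at the cost of a bounded blow-up from $t$ to $Ct$'' --- is not an argument, and any rerouting is constrained to use only $c$-bichromatic edges for the trees (otherwise the contracted set is no longer a cut), which essentially forces you back into $T$ and hence back into the nesting problem. So the central claim, that a $K_{Ct}$ minor in $G_{\mathrm m}$ forces an odd $K_t$ minor in $G$, is unproven; I am not aware of such a statement in the literature, and it would be a substantial new lemma rather than a routine step.

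For context, this theorem is imported from Geelen, Gerards, Reed, Seymour and Vetta and is not proved in the present paper. The known proofs take a quite different route: reduce to a highly connected (or, here, weakly connected) subgraph, extract a dense subgraph, use the Kostochka--Thomason-type bound to obtain a \emph{bipartite} $K_{12t}$ minor (\cref{t:BipartiteFromDensity}), and then convert it into an odd $K_t$ minor via \cref{t:BipartiteToOdd}, using the connectivity to rule out the alternative outcome of a small cutset leaving a large bipartite piece. The fact that the ``oddness'' is recovered only through that nontrivial conversion theorem, rather than by isolating a monochromatic subgraph as you propose, is a strong hint that your reduction cannot be completed without comparable machinery.
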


Kawarabayashi~\cite{Kaw09} gave a  simpler proof of  Theorem~\ref{t:GGRS}. Both proofs rely on the following celebrated result,  obtained independently by Kostochka~\cite{Kostochka82,Kostochka84} and Thomason~\cite{Thomason84}. 

\begin{thm}[\cite{Kostochka82,Kostochka84,Thomason84}]\label{t:KT} For every integer  $t \geq 1$, every graph with no $K_t$ minor is $O(t\sqrt{\log{t}})$-degenerate. 
\end{thm}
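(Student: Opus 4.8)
\emph{Proof proposal.} The class of graphs with no $K_t$ minor is closed under taking subgraphs, so the degeneracy bound is equivalent to saying that every subgraph of such a graph has a vertex of degree $O(t\sqrt{\log t})$. Since a graph of average degree $\geq D$ contains a subgraph of minimum degree $\geq D/2$, and minimum degree is at most average degree, this in turn reduces to the \emph{minor-forcing} statement that I would actually prove: there is an absolute constant $c$ such that every graph $G$ with $\delta(G)\geq c\,t\sqrt{\log t}$ contains $K_t$ as a minor.

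The engine is a probabilistic construction of the $t$ branch sets. Suppose first that we have a \emph{dense minor}: a minor $H$ of $G$ with $h := |V(H)|$ of order $t\sqrt{\log t}$ whose edge density is bounded below by an absolute constant $\gamma_0$. Colour $V(H)$ independently and uniformly with $t$ colours, so the colour classes have size about $h/t = \Theta(\sqrt{\log t})$. The key estimate is that, for a fixed pair of colours, the probability that the two corresponding classes span no edge of $H$ is at most $\exp\bigl(-\Omega_{\gamma_0}\bigl((h/t)^2\bigr)\bigr)$ --- the exponent is \emph{quadratic} in the class size, since morally this is the probability that two random sets of size $\sim h/t$ induce no edge of a positive-density graph. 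A union bound over the $\binom{t}{2}$ pairs then shows that with positive probability every two classes are joined by an edge, provided $(h/t)^2 = \Omega(\log t)$; this is exactly what $h=\Theta(t\sqrt{\log t})$ buys, and it is the point at which the factor $\sqrt{\log t}$ --- rather than the $\log t$ of a cruder single-vertex argument --- enters. To make the classes connected one observes that a random induced subgraph on $\Theta(\sqrt{\log t})$ vertices of a positive-density graph has a component covering all but a negligible fraction of its vertices; replacing each class by its largest component still leaves $t$ pairwise adjacent connected branch sets of size $\Omega(\sqrt{\log t})$, hence a $K_t$ minor in $H$, hence in $G$.

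The real work is producing the dense minor $H$. A minimum-degree bound does not supply it outright --- a large power of a cycle has, after any contraction, far too many vertices for its density, yet still contains an enormous clique minor obtained by contracting long arcs. I would therefore use an extremal contraction argument: among all minors of $G$ choose one, $H_0$, maximising $|E(H_0)| - \lambda|V(H_0)|$ for a suitable $\lambda$ of order $t\sqrt{\log t}$. Extremality forces $\delta(H_0)\geq\lambda$ and, more importantly, that contracting any connected subgraph of $H_0$ destroys many edges, which one converts into an expansion/pseudorandomness property of $H_0$; one then takes $H = H_0$ if $|V(H_0)|$ is already $O(t\sqrt{\log t})$, or contracts a partition of $V(H_0)$ into connected pieces of controlled size (e.g.\ balls of bounded radius) to obtain $H$. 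I expect this densification step to be the main obstacle, together with the bookkeeping needed to make a single absolute constant $c$ serve all the probabilistic estimates simultaneously --- in particular, controlling the dependence between the events ``each colour class spans a large connected subgraph'' and ``each pair of colour classes is adjacent.'' Once the dense minor is in hand the rest is routine. This is, in essence, the argument of Kostochka and of Thomason, with Thomason additionally pinning down the best possible constant in the $\Theta(t\sqrt{\log t})$ bound.
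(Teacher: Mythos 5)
The paper offers no proof of \cref{t:KT} --- it is imported verbatim from Kostochka and Thomason --- so I can only judge your sketch on its own terms. Your architecture is indeed the classical one: reduce degeneracy to the statement that minimum degree $ct\sqrt{\log t}$ forces a $K_t$ minor, extract a small dense minor by an extremal contraction argument, randomly partition it into $t$ classes of size $s=\Theta(\sqrt{\log t})$, and union-bound over the $\binom{t}{2}$ pairs; you also correctly identify that the $\sqrt{\log t}$ comes from needing failure probability $\exp(-\Omega(s^2))$ per pair against $e^{O(\log t)}$ pairs.

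The genuine gap is that your ``key estimate'' is false as stated, not merely unproven. If the only guaranteed property of the dense minor $H$ is that its edge density is bounded below by an absolute constant $\gamma_0$, then two independent random $s$-sets can fail to span an edge with probability as large as $\exp(-O(s))$ rather than $\exp(-\Omega(s^2))$: take $H$ to be a clique on $\sqrt{\gamma_0}\,h$ vertices plus isolated vertices; both classes miss the clique with probability about $(1-\sqrt{\gamma_0})^{2s}$, which for $s=\Theta(\sqrt{\log t})$ is vastly larger than $t^{-2}$, so the union bound collapses. Upgrading the hypothesis to minimum degree $(1-\beta)h$ still does not yield the quadratic exponent: join two cliques of size $h/2$ by a complete bipartite graph from which one deletes all edges between the $j$-th block of size $\beta h$ on one side and the $j$-th block on the other; the minimum degree is $(1-\beta)h-1$, yet two random $s$-sets lie in matched blocks, and hence span no edge, with probability roughly $\beta^{2s}$. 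The $\exp(-\Omega(s^2))$ bound requires genuine pseudorandomness of $H$ (e.g.\ that every small set $A$ satisfies $|V(H)\setminus N(A)|\le (1-\gamma)^{|A|}\,\v(H)$), and manufacturing a minor with that property --- Kostochka via his decomposition, Thomason by iterating the fact that in a minor-minimal graph with $\e(H)\ge\lambda\v(H)$ every edge lies in at least $\lambda-1$ triangles --- is the heart of the proof, not a detail absorbed into the ``densification step.'' A second, smaller gap: after replacing each colour class by its largest component, the edge witnessing adjacency between classes $i$ and $j$ may be incident to a discarded vertex, so pairwise adjacency of the pruned branch sets does not follow; the standard arguments must handle connectivity and adjacency jointly rather than by post-hoc pruning.
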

   
Note that Theorem~\ref{t:KT} directly implies that every graph with no $K_t$ minor is $O(t\sqrt{\log{t}})$-colorable. 
Very recently, the present authors~\cite{NorSong19}  made  the first improvement on the order of magnitude of this  bound. Shortly after, Postle~\cite{Pos19} further improved a major part of the argument from~\cite{NorSong19} showing the following. 

\begin{thm}[\cite{Pos19}]\label{t:main} For every $\beta > \frac 1 4$ and every integer  $t \geq 1$,
every graph with no $K_t$ minor is $O(t (\log t)^{\beta})$-colorable.
\end{thm}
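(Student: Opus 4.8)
The strategy, following \cite{NorSong19,Pos19}, is to argue by contradiction and to extract a $K_t$ minor from a vertex-minimal counterexample. Since Theorem~\ref{t:KT} already yields the case $\beta\geq\tfrac12$, fix $\tfrac14<\beta<\tfrac12$ and a large constant $C$, and suppose $G$ has no $K_t$ minor, $\chi(G)\geq k:=\lceil C\,t(\log t)^{\beta}\rceil$, and $|V(G)|$ is minimum. Then $\chi(G)=k$ and $G$ is $k$-critical, so $\delta(G)\geq k-1$ and, crucially, $G$ inherits the connectivity properties of color-critical graphs: it is $2$-connected, has no clique cutset, and --- in the quantitative forms used for this circle of problems --- admits no ``cheap'' separation into two large parts. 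The entire argument is aimed at showing that a graph with these properties contains a $K_t$ minor, contradicting the choice of $G$.

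Feeding the minimum-degree bound directly into Theorem~\ref{t:KT} only produces a $K_s$ minor with $s=\Omega(k/\sqrt{\log k})=\Omega\!\bigl(t(\log t)^{\beta-1/2}\bigr)$, which is $o(t)$ for $\beta<\tfrac12$, and the extremal random graphs for Theorem~\ref{t:KT} show that the average degree alone cannot do better. The new ingredient is a \emph{reduction}: from $G$ one passes, through a controlled sequence of contractions and deletions of sparse parts, to a graph $H$ that still has no $K_t$ minor and still has average degree $\Omega(t(\log t)^{\beta})$, but is now much more tightly structured --- in particular $|V(H)|$ is bounded by a function of $t$ small enough to move $H$ out of the regime where the clique-minor bounds are lossy, the density being maintained by exploiting the criticality-enforced absence of cheap balanced separations (across such a separation one could either recurse on a smaller instance or contract to increase density). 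Carrying this out while giving up only a controlled polylogarithmic factor in the density is the heart of the matter.

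Given $H$, the second step finds a $K_t$ minor inside it via a sharpened Kostochka--Thomason estimate that is sensitive to the order of the host graph: a graph on $n$ vertices with average degree $d$ has a $K_s$ minor with $s$ of order $d/\sqrt{\log(2+n/d)}$ (provable, for instance, by a counting estimate over random contractions, or by first extracting an expanding subgraph), so that when $n=|V(H)|$ is small the logarithmic correction drops from $\sqrt{\log t}$ towards $\sqrt{\log\log t}$. With $d=\Omega(t(\log t)^{\beta})$ this yields $s\geq t$ once $C$ is large enough, and the resulting $K_t$ minor contradicts the minimality of $G$. The value of $\beta$ is then pinned down by balancing the density lost in the reduction against the savings gained in the minor-finding step.

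The main obstacle is the reduction. One has to control three parameters at once --- the number of vertices, the minimum (or average) degree, and the chromatic number --- across a long sequence of minor and subgraph operations, and the way they trade off is exactly what keeps the exponent bounded away from $0$; organising the estimates as a bootstrap off Theorem~\ref{t:KT} makes $\tfrac14$ visible as the limiting value of the resulting recursion. A secondary difficulty is making the ``no cheap balanced separation'' consequence of color-criticality quantitatively strong enough to drive the reduction, which is where properties genuinely special to $\chi$-critical graphs, beyond the minimum-degree bound, are needed.
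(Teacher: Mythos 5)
The paper does not actually reprove \cref{t:main}; it cites it from~\cite{Pos19} and imports its key ingredient as \cref{t:newforced}. Still, your sketch can be measured against the argument the paper runs for the odd analogue (\cref{t:main2}), which follows the same template, and there is a genuine error at the step you rely on most. The ``sharpened Kostochka--Thomason estimate'' you invoke --- that an $n$-vertex graph of average degree $d$ has a $K_s$ minor with $s=\Omega\bigl(d/\sqrt{\log(2+n/d)}\bigr)$ --- is false. For $G(n,1/2)$ one has $d\approx n/2$, so $\log(2+n/d)=O(1)$ and your bound would give a clique minor of order $\Omega(n)$, whereas the largest clique minor of $G(n,1/2)$ has order $\Theta(n/\sqrt{\log n})$ (Bollob\'as--Catlin--Erd\H{o}s). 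No order-sensitive strengthening of \cref{t:KT} of this form can finish the proof from a single small dense piece, and the actual argument does not attempt to. Instead it collects roughly $\binom{y}{2}$ \emph{disjoint} dense subgraphs with $y\approx(\log t)^{1/4}$ (each produced by iterating the density-increment dichotomy of \cref{t:newforced}), takes from each piece only a modest $K_{2x}$-expansion with $x\approx t/(\log t)^{1/4}$ --- available already from connectivity via \cref{l:rooted}, with no improved minor-density theorem needed --- and links these expansions through a highly connected part of the graph to assemble $K_{xy}=K_t$ (this is \cref{t:minorfrompieces}). The genuinely small-order regime is handled separately by the independence-number bound (\cref{t:OddAlpha}/\cref{c:big}), not by a minor-density estimate.

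Second, even setting that aside, the reduction you describe is only asserted: you say that passing to a small dense $H$ ``while giving up only a controlled polylogarithmic factor in the density is the heart of the matter'' and leave it there. That step is precisely \cref{t:newforced}, whose proof occupies most of~\cite{Pos19}, and color-criticality plus the absence of cheap separations does not by itself deliver it; the paper's substitute for criticality is the weak-connectivity reduction of \cref{l:weakconnect}, which is then combined with the density increment, not with a separator recursion. As written, your proposal identifies the right general shape (minimal counterexample, density increment, exploit small order), but the one concrete tool it commits to is wrong and the one it actually needs is left unproved.
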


In this paper we combine the results and ideas of~\cite{GGRSV08, NorSong19, Pos19} to extend Theorem~\ref{t:main} to odd minors, as follows.

\begin{thm}\label{t:oddmain} For every $\beta > \frac 1 4 $ and every integer  $t \geq 1$,
	every graph with no odd $K_t$ minor is $O(t (\log  t)^{\beta})$-colorable.
\end{thm}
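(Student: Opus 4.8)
The plan is to reduce the odd-minor coloring problem to the ordinary-minor case handled by Theorem~\ref{t:main}, following the strategy pioneered by Geelen et al.~\cite{GGRSV08} and Kawarabayashi~\cite{Kaw09}, but feeding in the stronger bound of~\cite{Pos19} at the appropriate point. Let $G$ be a graph with no odd $K_t$ minor, and suppose for contradiction that $\chi(G) \geq C t (\log t)^\beta$ for a large constant $C$; we may assume $G$ is vertex-minimal with this property, so $G$ is $(\chi(G)-1)$-vertex-connected in the usual sense and every proper subgraph has a much smaller chromatic number. The key structural dichotomy from~\cite{GGRSV08} is: either $G$ has a small set $X$ of vertices whose deletion makes the graph bipartite-like (more precisely, admits a certain "odd" structure that lets us two-color most of it), or $G$ has a large "odd clique minor" hidden in it. I would set up a BFS/DFS layering from a fixed vertex $r$: the parity of the layer gives a $2$-coloring of $G - $ (edges inside layers), so edges within a single BFS layer are the only obstruction to bipartiteness.

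The heart of the argument is then the following. Let $k \approx t(\log t)^\beta$ be the target. First I would show that if $G$ has no odd $K_t$ minor, then for the BFS layering above, the subgraph $H_i$ induced on each layer $L_i$ has no $K_{t'}$ minor for $t' \approx t$ — because a $K_{t'}$ minor living inside a single BFS layer, together with the tree paths back to the root, can be arranged to yield an odd $K_t$ minor in $G$ (here one uses that paths to the root can be made to have controlled parity, splitting the root's neighborhood appropriately; this is exactly the mechanism in~\cite{Kaw09}). Applying Theorem~\ref{t:main} to each $H_i$ gives a proper coloring of $H_i$ with $O(t(\log t)^\beta)$ colors. Now combine: color $G$ by the pair (parity of layer, color within layer under $H_i$'s coloring) — wait, adjacent vertices in consecutive layers can share the within-layer color, so instead color by (parity of layer $\bmod 2$, color of $v$ in $H_{i(v)}$) where the within-layer palettes for even and odd layers are kept disjoint. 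An edge either joins two consecutive layers (different parity class, so different color) or lies inside a layer (properly colored by $H_i$'s coloring). This uses $2 \cdot O(t(\log t)^\beta) = O(t(\log t)^\beta)$ colors total, contradicting $\chi(G) \geq Ct(\log t)^\beta$ for $C$ large enough.

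The main obstacle — and the place where the argument is genuinely more delicate than this sketch admits — is establishing that each layer $H_i$ has no (ordinary) $K_{t'}$ minor with $t'$ not much larger than $t$. A naive contraction of the tree paths does not obviously control the parity of the resulting connection edges: to get an \emph{odd} $K_t$ minor one must ensure the branch sets can be joined by paths whose total length has the right parity, and this is where the connectivity reduction of~\cite{GGRSV08} (splitting into the "deletable set $X$" case versus the highly connected case) is actually needed. Concretely, I expect to need: (i) a lemma saying that in a suitably connected graph with no odd $K_t$ minor, every BFS layer has no $K_{\lceil t/2 \rceil + o(t)}$ minor, or more realistically no $K_{2t}$ minor, which only costs a constant factor and is harmless for the $O(\cdot)$ bound; and (ii) careful handling of the small cut case, where one deletes $X$ with $|X| = o(t(\log t)^\beta)$, colors the pieces recursively, and recombines. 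Item (ii) is routine given minimality of $G$; item (i) is the crux and is essentially the content of~\cite{GGRSV08, Kaw09} repackaged so that the ordinary-minor black box is Theorem~\ref{t:main} rather than Theorem~\ref{t:KT}. Since we only care about the order of magnitude $t(\log t)^\beta$, we have slack to lose constant factors throughout, which is what makes the reduction go through cleanly; the content of the paper is checking that no \emph{super}-constant factor is lost in passing from $G$ to its layers.
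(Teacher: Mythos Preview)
Your approach is not the paper's, and the step you flag as the crux---that each BFS layer $G[L_i]$ has no $K_{O(t)}$ minor whenever $G$ has no odd $K_t$ minor---is not what \cite{GGRSV08} or \cite{Kaw09} prove, and it does not follow from the mechanism you sketch. A $K_{t'}$-expansion sitting entirely inside a single layer $L_i$ has all its vertices at the same distance from the root, so under the layer-parity $2$-coloring every edge of the expansion is monochromatic; but for an odd minor the \emph{tree} edges must be bichromatic while the connecting edges are monochromatic. Appending BFS-tree paths toward the root adds bichromatic tree edges, but does nothing to repair the tree edges that already lie within $L_i$. What \cite{GGRSV08} actually supplies is Theorem~\ref{t:BipartiteToOdd}: a \emph{bipartite} $K_{12t}$-expansion forces an odd $K_t$ minor (or a near-bipartite piece). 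An arbitrary $K_{t'}$ minor in a layer need not be bipartite, and the only general way to trade it for a bipartite one is via density (Theorem~\ref{t:BipartiteFromDensity}), which costs a $\sqrt{\log t}$ factor and destroys the target bound.

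For this reason the paper does not reduce to Theorem~\ref{t:main} as a black box. Instead it reruns the density-increment machinery of \cite{NorSong19,Pos19} directly in a bipartite-aware setting: pass to a weakly $l$-connected subgraph with $l\asymp t(\log t)^{1/4}$ (Lemma~\ref{l:weakconnect}), use Postle's Theorem~\ref{t:newforced} to harvest roughly $\sqrt{\log t}$ small dense pieces, and then prove a new linking result (Theorem~\ref{t:minorfrompieces}) that assembles bipartite $K_{2x}$-expansions found inside those pieces into a single bipartite $K_t$-expansion of $G$, controlling the parity of the linking paths via the parity-linkage machinery of \cite{KawReed09}. Corollary~\ref{c:BipartiteToOdd} then finishes. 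The substantive new content is Theorem~\ref{t:minorfrompieces}; your proposal offers no substitute for it.
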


The proof of \cref{t:oddmain} occupies the rest of the paper. 

\subsubsection*{Notation}

We use largely standard graph-theoretical notation. We denote by $\v(G)$, $\e(G)$, $\chi(G)$, and  $\kappa(G)$  the number of  vertices,  number of  edges,   chromatic number, and (vertex) connectivity of a graph $G$, respectively.     We use $\d(G)=\e(G)/\v(G)$ to denote the \emph{density} of a non-null graph $G$, and $G[X]$ to denote the subgraph of a graph $G$ induced by a set $X \subseteq V(G)$. 
For a positive integer $n$, let $[n]$ denote $\{1,2,\ldots,n\}$. The logarithms in the paper are natural unless specified otherwise.

Let $H$ and $G$ be graphs.  An \emph{$H$-expansion} in  $G$ is a function $\eta$ with domain $V(H) \cup E(H)$ such that \begin{itemize}
 	\item for every vertex $v \in V(H)$, $\eta(v)$ is a subgraph of $G$ which is a tree, and the trees $\{\eta(v)\}_{v \in V(H)}$ are pairwise vertex-disjoint, and 
 	\item for every edge $e=uv \in E(H)$, $\eta(e)$ is an edge of $G$ with one end in $V(\eta(u))$ and the other in  $V(\eta(v))$.	
 \end{itemize}
We call the trees $\{\eta(v): v\in V(H)\}$ the \emph{nodes} of the expansion, and denote by $\cup\eta$ the subgraph of $G$ with vertex set $\bigcup_{v\in V(H)} V(\eta(v))$ and edge set $ \{E(\eta(v)): v\in V(H)\}\cup  \{\eta(e): e\in E(H)\}$. An $H$-expansion $\eta$ is \emph{bipartite} if $\cup\eta$  is bipartite. Moreover, we say that an $H$-expansion $\eta$ in $G$ is \emph{$S$-rooted} for $S \subseteq V(G)$  if $|S|=\v(H)$  and  $|V(\eta(v)) \cap S|=1$ for every $v \in V(H)$. 
 
It is well-known and easy to see that  $G$ has an $H$ minor if and only if there is an $H$-expansion in $G$.
We say that $G$ has a \emph{bipartite $H$ minor} if   there exists a bipartite $H$-expansion in $G$.

\section{Proof of~\cref{t:oddmain}}\label{s:main}
In this section we  prove \cref{t:oddmain}, pending the proof of   a key technical result, \cref{t:minorfrompieces}, which is  proved in \cref{s:build}.

We use the same strategy as in~\cite{NorSong19}, where we established a new    upper bound on the chromatic number of graphs with no $K_t$ minor, and we refer the reader to~\cite[Section 2]{NorSong19} for the outline of the argument. Several parts of the proof, however, become much more involved. In particular, in~\cite{NorSong19} we could easily reduce the proof to the case when the graph has high connectivity. Here we shall introduce a non-standard technical  notion of connectivity which we now present.

Recall that the pair $(A,B)$   is a \emph{separation} of a graph $G$  if  $A \cup B = V(G)$ and every edge of $G$ has both ends in $A$ or   both in $B$.  A separation  $(A,B)$ of $G$ is \emph{proper} if $A-B \neq  \emptyset$ and $B-A \neq \emptyset$. The \emph{order} of a  separation $(A,B)$ is $|A \cap B|$. We say that a graph $G$ is \emph{weakly $k$-connected} if for every proper separation $(A,B)$ of $G$ of order at most $k$, we have $$\min\{|A-B|,|B-A|\} < |A \cap B|.$$  

Our first lemma   ensures that every  graph $G$ with large chromatic number contains a subgraph $H$ with high weak connectivity such that the chromatic number of $H$ is still fairly large. All colorings in the proof of   \cref{l:weakconnect} are proper vertex-colorings.

\begin{lem}\label{l:weakconnect}
Let $k, l$ be positive integers with $k \geq 3l$. Then every graph  $G$ with $\chi(G)>k$ contains a  weakly $l$-connected subgraph $H$ such that  $$\chi(H)>k-2l.$$
\end{lem}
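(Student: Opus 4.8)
The plan is to take $H$ to be a subgraph of $G$ with $\chi(H) > k - 2l$ having as few vertices as possible, and to prove that $H$ is automatically weakly $l$-connected; the lemma then follows at once. By minimality of $\v(H)$ we have $\chi(H) = k - 2l + 1$ and $H$ is vertex-critical, so $H$ has minimum degree at least $k - 2l$ and no clique of $H$ is a cutset. Both facts will be used only to keep the analysis clean.

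Suppose for contradiction that $H$ is not weakly $l$-connected, and let $(A,B)$ be a proper separation of $H$ of order $s := |A \cap B| \le l$ with $\min\{|A-B|,|B-A|\} \ge s$, chosen with $s$ minimum and, subject to that, with $\min\{|A-B|,|B-A|\}$ minimum; orient it so that $|A-B| \le |B-A|$ and put $S = A \cap B$. A short argument using the minimality of $s$ shows that every vertex of $S$ has a neighbour in $A-B$: if $v \in S$ had none, then all edges at $v$ would lie inside $B$, so $(A \setminus \{v\}, B)$ would be a proper separation of order $s-1$ still satisfying $\min\{|A-B|,|B-A|+1\} = |A-B| \ge s > s-1$, contradicting the choice of $s$. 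Since $H[A]$ and $H[B]$ are proper subgraphs of $H$, the choice of $H$ gives $\chi(H[A]) \le k-2l$ and $\chi(H[B]) \le k-2l$. The tool I would use now is the standard gluing estimate along the cutset $S$: if $H[A]^{+}$ and $H[B]^{+}$ denote the graphs obtained from $H[A]$ and $H[B]$ by adding all edges inside $S$, then $\chi(H) \le \max\{\chi(H[A]^{+}),\chi(H[B]^{+})\}$, because an optimal colouring of either $H[X]^{+}$ is rainbow on $S$, so after permuting colours the two colourings agree on $S$, and as there are no edges between $A-S$ and $B-S$ they combine to a colouring of $H$. Hence at least one of $H[A]^{+}, H[B]^{+}$ has chromatic number greater than $k-2l$ and strictly fewer vertices than $H$.

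The obstacle — and the place where a non-standard connectivity notion is forced on us, since genuine high connectivity is unattainable with only an additive loss in $\chi$ — is that $H[A]^{+}$ and $H[B]^{+}$ are not subgraphs of $G$, so the minimality of $H$ does not apply to them. I would handle this by running the minimization not over subgraphs of $G$ but over the larger family $\mathcal F$ of graphs obtained from $G$ by repeatedly deleting vertices and completing a separation of order at most $l$ into a clique: $\mathcal F$ is closed under the gluing operation above, so a member of $\mathcal F$ with $\chi > k-2l$ and fewest vertices is weakly $l$-connected by exactly the argument just given. The main work — and what I expect to be the real obstacle — is to turn this back into the conclusion as stated, that is, to produce an actual subgraph of $G$ that is weakly $l$-connected with $\chi > k-2l$. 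Here one uses that a cutset that has just been completed is uncrossed by every later separation, so only few of the completing cliques survive in a minimal member of $\mathcal F$, each on at most $l$ vertices, and deleting their edges costs a controlled amount of chromatic number; it is precisely this bookkeeping, calibrated so that the slack in $k \ge 3l$ absorbs the loss and leaves $\chi(H) > k-2l$, that occupies the proof. The case $l = 1$, where the weakly $1$-connected subgraphs are exactly the blocks and no chromatic number is lost at all, is immediate and serves both as a sanity check and as the base of an induction on $l$.
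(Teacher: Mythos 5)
You have correctly identified the central obstacle --- that the glued graphs $H[A]^{+}$, $H[B]^{+}$ are not subgraphs of $G$, so naive minimization fails --- but your proposed repair is not a proof, and its key claim is unjustified. You minimize over a family $\mathcal F$ closed under completing small separators into cliques, and then assert that in a minimal member ``only few of the completing cliques survive,'' so that deleting their edges costs a controlled amount of chromatic number. Nothing in the construction bounds the number of completed cliques: the underlying decomposition tree can have arbitrarily many leaves, so the minimal member of $\mathcal F$ may carry arbitrarily many added $l$-cliques, and removing the edges of $m$ cliques of size $l$ can drop the chromatic number by up to $m(l-1)$, far exceeding the slack $2l$ afforded by $k \ge 3l$. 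Deleting those edges can also create new small separations and destroy weak connectivity. The claim that a completed cutset ``is uncrossed by every later separation'' is likewise asserted without proof. So the part you describe as ``the bookkeeping that occupies the proof'' is exactly the part that is missing, and I do not see how to make it work along these lines.

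The paper avoids the issue entirely by replacing clique completion with precoloring extension. One minimizes $\v(H)$ over genuine subgraphs $H$ of $G$ for which some set $Z \subseteq V(H)$ with $|Z| \le 2l$ admits a $k$-coloring of $H[Z]$ that does not extend to $H$; such an $H$ exists (take $H=G$, $Z=\emptyset$) and satisfies $\chi(H) > k-2l$. Given a bad separation $(A,B)$, one first extends the precoloring of $Z$ to a $k$-coloring of $H[Z\cup A]$ (using minimality on the subgraph $H[Z \cup A]$), and then regards its restriction to $Z' = (Z\cup A)\cap B$ --- a set of size at most $|Z\cap(B-A)|+|A\cap B| \le 2l$ --- as a new precoloring of the smaller subgraph $H[B]$, to which minimality again applies. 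The boundary information is thus carried by a coloring of at most $2l$ vertices rather than by added edges, every graph in the argument stays a subgraph of $G$, and the budget $2l$ (not $l$) is precisely what absorbs the separator together with the leftover precolored vertices. This is the idea your proposal is missing; with it, the lemma follows in a dozen lines, whereas your route would require a genuinely new argument to control the accumulated clique completions.
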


\begin{proof}  Let $H$ be a subgraph of $G$ with $\v(H)$ minimum such that for some $Z\subseteq V(H)$ with $|Z|\le 2 l$ there exists a  $k$-coloring $\phi_Z : Z \to [k]$ of  $H[Z]$ which cannot be extended to a  $k$-coloring of $H$.    It is easy to see that such a subgraph $H$ exists, and satisfies $\chi(H) > k-2l$. 
	
It remains to show that $H$ is weakly $l$-connected. Suppose $H$ is not weakly $l$-connected. Then there exists a proper separation $(A,B)$ of $H$ of order at most $l$ such that \[\min\{|A-B|,|B-A|\} \geq |A \cap B|.\]
   We may   assume that $|Z \cap (B-A)| \leq l$. 
Suppose first that $B-A \ne  Z \cap (B-A)$. 
By the choice of $H$,  there exists a $k$-coloring $\phi_A: Z \cup A \to [k]$ of $H[Z \cup A]$ such that $\phi_A$ extends $\phi_Z$. Let $Z' = (Z \cup A)  \cap B$. Then \[|Z'|=|Z\cap (B-A)|+|A\cap B| \leq 2l.\] By the choice of $H$, $\phi_A|_{Z'}$ can be extended to a $k$-coloring of $H[B]$. Thus  $\phi_A$ (and so $\phi_Z$) can be extended to a $k$-coloring of $H$, a contradiction.
This proves that $B-A = Z \cap (B-A)$. 
Let $Z'' =  (Z \cup B)  \cap A$.  Then 
\[|Z'' \cup Z| \le |Z|+|A\cap B|\leq 3l \leq k.\]
It follows that $A-(Z''\cup Z)\ne \emptyset$ because $\chi(H)>k$. By the choice of $H$, 
  there exists a $k$-coloring $\phi $ of $H[Z'' \cup Z]$ such that $\phi$ extends  $\phi_Z$. 
Note that $|B-A| \ge   |A \cap B|$ and so $|Z''| \le  |Z| - |B-A| +  |A \cap B| \leq |Z|\le 2l$. By the minimality of $H$,  the coloring $\phi|_{Z''}$ can be further extended to a $k$-coloring of $H[A]$. This  yields a $k$-coloring of $H$ extending  $\phi_Z$, a contradiction.  
\end{proof}

Our first application of \cref{l:weakconnect},  combined with the following result of Geelen et al. \cite{GGRSV08},  allows us to convert bipartite clique minors to odd clique minors.

\begin{thm}[\cite{GGRSV08}]\label{t:BipartiteToOdd}
	If   a graph  $H$  has a bipartite $K_{12t}$ minor, then either $H$ contains an odd $K_t$ minor, or there exists $X \subseteq V(H)$ with $|X| \le  8t-2$ such that some component of $H\setminus X$ is bipartite and contains at least $8t+2$ vertices.
\end{thm}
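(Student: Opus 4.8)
The plan is to start from a bipartite $K_{12t}$-expansion $\eta$ in $H$ and try to upgrade it to an odd $K_t$ minor; the separator outcome will be exactly what is left over when this upgrade fails. Write $U=\cup\eta$, let $T_1,\dots,T_{12t}$ be the nodes of $\eta$, fix a proper $2$-colouring $c\colon V(U)\to\{0,1\}$ (so every edge of $U$ is $c$-bichromatic and $U$ is connected), and let $C$ be the component of $H$ with $V(U)\subseteq V(C)$, so $\v(C)\ge \v(U)\ge 12t$. If $C$ is bipartite, then $X=\emptyset$ works, since $C$ is then a bipartite component of $H$ on at least $12t\ge 8t+2$ vertices; so assume $C$ is not bipartite.

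Call a path $W$ in $C$ a \emph{bad detour} if both ends of $W$ lie in $V(U)$ and $\v(W)$ differs in parity from the length of any walk in $U$ between those two ends (in the extreme case $W$ is a single $c$-monochromatic edge of $H[V(U)]$); since $U$ is connected and $C$ is non-bipartite, $C$ has a bad detour, and a shortest one has its interior disjoint from $V(U)$, so its ends lie in at most two trees. The engine of the proof is the following constructive claim: \emph{if $C$ contains $t$ pairwise vertex-disjoint bad detours $W_1,\dots,W_t$ such that no two of them have an end in a common tree, then $H$ has an odd $K_t$ minor.} Indeed, if the ends of $W_a$ lie in $T_{i_a}$ and $T_{j_a}$ (all $2t$ indices distinct) one takes the $a$-th branch set to be $V(T_{i_a})\cup V(W_a)\cup V(T_{j_a})$; it is spanned by the tree $T_{i_a}\cup W_a\cup T_{j_a}$, and a proper $2$-colouring of this tree makes the branch set bichromatically connected while, precisely because $W_a$ has the ``wrong'' parity, forcing $T_{i_a}$ and $T_{j_a}$ to be coloured by \emph{opposite} flips of $c$. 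Giving each branch set its own global flip $\sigma_a\in\{0,1\}$ and using that all four branch edges of $\eta$ between $\{T_{i_a},T_{j_a}\}$ and $\{T_{i_b},T_{j_b}\}$ are present, a short parity calculation shows that for every value of $\sigma_a+\sigma_b$ one of these edges is monochromatic — so every pair of branch sets is joined by a monochromatic edge and we have an odd $K_t$ minor.

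This reduces everything to showing that when $C$ has no such family of $t$ bad detours there is a set $X$ with $|X|\le 8t-2$ whose deletion leaves bipartite the component that meets almost all of the trees; that component is then large — on at least $8t+2$ vertices — once one counts the surviving trees together with the bipartite parts of $C$ hanging off them. This last step is where I expect the main obstacle. Morally it is an Erd\H{o}s--P\'osa / Menger-type principle (``few disjoint bad detours $\Rightarrow$ small transversal of all bad detours''), but two things must be handled with care. First, bad detours are not literally odd cycles, so one should transform — for instance by contracting the colour classes $P,Q$ of $U$ to single vertices $p^{*},q^{*}$ — to turn bad detours into odd cycles through $p^{*}$ and/or $q^{*}$ in an auxiliary graph, and then transfer a cycle-hitting result. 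Second, one needs a transversal of size linear in $t$, not the generic $O(t\log t)$ bound for odd cycles, and this is exactly where the generous supply $12t\gg 2t$ of trees is used: the slack lets one re-route any family of $t$ disjoint bad detours so that no two touch a common tree, and also absorbs the loss incurred when $X$ is finally removed. Pinning down the bookkeeping so that the constants come out as $12t\to(8t-2,\,8t+2)$, and verifying that $H\setminus X$ really does have the asserted bipartite component, is the technical heart; I would run it as a minimal-counterexample argument in which $\eta$, and then the bad detours, are chosen as economically as possible.
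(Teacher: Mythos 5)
First, note that the paper does not prove this statement at all: \cref{t:BipartiteToOdd} is imported verbatim from Geelen--Gerards--Reed--Seymour--Vetta \cite{GGRSV08}, so there is no in-paper proof to compare against. Judged on its own, your proposal gets the first half right and leaves the second half genuinely open. The ``engine'' claim is correct and is the standard argument: for a parity-breaking path $W_a$ joining distinct trees $T_{i_a},T_{j_a}$, the unique proper $2$-colouring of $T_{i_a}\cup W_a\cup T_{j_a}$ forces opposite flips of $c$ on the two trees, and your parity computation showing that one of the four $\eta$-edges between $\{T_{i_a},T_{j_a}\}$ and $\{T_{i_b},T_{j_b}\}$ is monochromatic for either value of $\sigma_a+\sigma_b$ is correct. (Minor fixable points: you must also keep each $W_a$ internally disjoint from \emph{all} trees, not just ensure distinct end-trees, or the branch sets intersect; and you must dispose of detours with both ends in one tree.)

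The real gap is in the second half, which you explicitly defer as ``the technical heart.'' Two concrete problems. First, the tool you are groping for already exists with a linear bound --- it is exactly the Erd\H{o}s--P\'osa theorem for odd $S$-paths quoted as \cref{t:parity1} in this paper ($k$ disjoint odd $S$-paths or a transversal of size $2k-2$; with $k=4t$ this is where $8t-2$ comes from) --- so the detour through contracting colour classes and hitting odd cycles through $p^*,q^*$ is both unnecessary and unlikely to give linear constants. Second, and more seriously, even granting a set $X$ with $|X|\le 8t-2$ meeting every parity-breaking path, the conclusion that a \emph{component} of $H\setminus X$ is bipartite does not follow, and your own opening claim ``since $U$ is connected and $C$ is non-bipartite, $C$ has a bad detour'' is false: attach an odd cycle to $U$ by a single pendant path at one vertex $u\in V(U)$. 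No path with both ends in $V(U)$ traverses that piece (it would have to enter and leave through $u$), so there are no bad detours and the odd-$S$-path transversal is empty, yet the component containing $U$ is non-bipartite. Any correct proof must additionally deal with non-bipartite pieces attached to $U$ through cut vertices (e.g.\ by working with blocks, or by enlarging $X$), and must also account for the lower bound $8t+2$ on the size of the surviving bipartite component, which does not follow from the naive count $12t-(8t-2)=4t+2$ of untouched single-vertex trees. As written, the proposal is a plausible plan for roughly half the theorem, not a proof.
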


\begin{cor}\label{c:BipartiteToOdd} Let $k,t$ be positive integers with $k \geq 16t$. Assume that 
every graph $H$ with $\chi(H) > k$ has either a bipartite $K_{12t}$ minor or an odd $K_t$ minor. Then every graph $G$ with  $\chi(G) > k+16t$ has an odd $K_t$ minor.  
\end{cor}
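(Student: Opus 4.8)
The plan is to combine \cref{l:weakconnect} with \cref{t:BipartiteToOdd}: first pass from $G$ to a weakly $l$-connected subgraph of large chromatic number, then apply the hypothesis of the corollary there, and finally use weak connectivity to rule out the ``bad'' outcome of \cref{t:BipartiteToOdd}.

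Concretely, given $G$ with $\chi(G) > k + 16t$, I would apply \cref{l:weakconnect} with the parameter $l := 8t$ and with $k + 16t$ playing the role of ``$k$''; since $k \ge 16t$ we have $k + 16t \ge 32t \ge 3l$, so the hypothesis of the lemma holds and we obtain a weakly $8t$-connected subgraph $H \subseteq G$ with $\chi(H) > (k+16t) - 2(8t) = k$. By the assumption of the corollary, $H$ has either a bipartite $K_{12t}$ minor or an odd $K_t$ minor. In the latter case we are done, since any minor of $H$ is a minor of $G$, so assume $H$ has a bipartite $K_{12t}$ minor. Then \cref{t:BipartiteToOdd} applies, and it either directly yields an odd $K_t$ minor of $H$ (and we are done again) or produces a set $X \subseteq V(H)$ with $|X| \le 8t - 2$ and a bipartite component $C$ of $H \setminus X$ with $\v(C) \ge 8t + 2$.

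It remains to show this last alternative is impossible. I would put $A := X \cup V(C)$ and $B := V(H) \setminus V(C)$; since $C$ is a component of $H \setminus X$, no edge joins $V(C)$ to $V(H) \setminus (X \cup V(C))$, so $(A,B)$ is a separation of $H$ with $A \cap B = X$ and $A - B = V(C)$. If $B - A = \emptyset$, then $V(H) = X \cup V(C)$, and hence $\chi(H) \le \chi(C) + |X| \le 2 + (8t-2) = 8t \le k$, contradicting $\chi(H) > k$. Otherwise $(A,B)$ is a proper separation of order $|X| \le 8t - 2 \le l$, so weak $8t$-connectivity forces $\min\{|A-B|,|B-A|\} < |A\cap B| \le 8t-2$; as $|A - B| = \v(C) \ge 8t+2$, this gives $|B - A| \le 8t - 3$. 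Coloring $H[A]$ and $H[B - A]$ with disjoint sets of colors (legitimate because $A$ and $B - A$ partition $V(H)$ and every edge of $H$ lies inside $A$ or inside $B$), and bounding $\chi(H[A]) \le \chi(C) + |X| \le 2 + (8t-2)$, we get $\chi(H) \le \chi(H[A]) + \chi(H[B-A]) \le (2 + (8t-2)) + (8t-3) = 16t - 3 < k$, again a contradiction. Hence $H$, and therefore $G$, has an odd $K_t$ minor.

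I expect no serious obstacle here: this is a ``gluing'' statement whose proof is driven entirely by the three preceding results. The only things to watch are the degenerate case $B - A = \emptyset$ and making the constants line up, which is exactly why the statement assumes $k \ge 16t$ and spends $16t$ extra colors to afford the passage to a weakly $8t$-connected subgraph.
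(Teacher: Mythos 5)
Your proposal is correct and follows essentially the same route as the paper: pass to a weakly $8t$-connected subgraph via \cref{l:weakconnect}, apply the hypothesis and \cref{t:BipartiteToOdd}, and use weak connectivity to bound $\chi(H)$ by roughly $16t$ in the bad case. The only differences are cosmetic (you place the bipartite component on the $A$-side of the separation and treat the degenerate case $B-A=\emptyset$ explicitly, which the paper leaves implicit), and your constants all check out.
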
	

\begin{proof} Let $G$ be a graph with $\chi(G)> k + 16t$. Suppose for a contradiction that $G$ has no odd $K_t$ minor. By \cref{l:weakconnect},  $G$ contains a weakly  $8t$-connected subgraph $H$ with $\chi(H)>k$. Note that  $H$ has no odd $K_t$ minor.  By our assumption, $H$ has a bipartite $K_{12t}$ minor. Furthermore,
by \cref{t:BipartiteToOdd}, there exists a  proper separation $(A,B)$ of $H$ of order at most $8t-2$  such that $H[B-A]$ is bipartite, and $|B-A| \geq 8t+2 > |A \cap B|$. Since $H$ is  weakly $8t$-connected, we see that $|A-B| < |A \cap B|$. But then $$\chi(H) \leq \chi(H[B-A]) + \chi(H[A\cap B])+\chi(H[A-B]) < 16t-2 <k,$$ 
a contradiction.
\end{proof}	

The second ingredient of the proof of  \cref{t:oddmain} allows us to deal with the case when  $\v(G)$ is small. It is based on the following bound due to Kawarabayashi and the second author~\cite{KawSong07} on the independence number of graphs with no odd $K_t$ minor.

\begin{thm}[\cite{KawSong07}]\label{t:OddAlpha}
	Let $G$ be a graph with no an odd $K_t$ minor.  Then  $\alpha(G)  \geq  \v(G)/(2t)$. 
\end{thm}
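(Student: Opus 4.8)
The plan is to adapt the extremal clique‑minor argument of Duchet and Meyniel to the odd setting. We may assume $G$ is connected, since $\v(\cdot)$, $\alpha(\cdot)$ and the odd‑minor relation all behave well under passing to components, and we may assume $t\ge 2$ (for $t=1$ a graph with no odd $K_1$ minor is null). Call a family $(T_1,\dots,T_s;c)$ an \emph{odd $K_s$-model} if $T_1,\dots,T_s$ are pairwise vertex‑disjoint trees in $G$, $c$ is a proper $2$-colouring of the forest $\bigcup_i T_i$, and for each $i\neq j$ there is an edge of $G$ between $T_i$ and $T_j$ whose two ends receive the same colour under $c$. A short check shows that odd $K_s$-models of $G$ correspond exactly to odd $K_s$ minors of $G$ (contract the tree edges; the monochromatic cross‑edges survive, and the colour classes exhibit the tree edges as a cut), so in particular an odd $K_t$-model would be forbidden. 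Among all odd $K_s$-models of $G$ choose one with $s$ as large as possible, then with $\sum_i |V(T_i)|$ as large as possible; then $s\le t-1$.

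Next I would show $\bigcup_i V(T_i)=V(G)$. If not, then since $G$ is connected some vertex $v\notin\bigcup_iV(T_i)$ has a neighbour $u\in V(T_i)$ for some $i$; adding $v$ to $T_i$ as a leaf at $u$ and colouring it $1-c(u)$ keeps $c$ proper and does not destroy any monochromatic cross‑edge, so it yields an odd $K_s$-model covering strictly more vertices, contradicting the choice. Hence $V(G)$ is partitioned into $V(T_1),\dots,V(T_s)$ with $s\le t-1$, so some $T_i$ has $|V(T_i)|\ge \v(G)/(t-1)$ and one of its two colour classes has at least $|V(T_i)|/2\ge \v(G)/(2(t-1))\ge \v(G)/(2t)$ vertices.

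To finish, that colour class must be an independent set of $G$, i.e.\ $G[V(T_i)]$ must be bipartite with the classes of $c|_{V(T_i)}$ as its parts; this is the point with no analogue for ordinary minors and is where I expect the real difficulty. If $G[V(T_i)]$ is non‑bipartite it contains an edge monochromatic under $c$, hence an odd cycle inside $V(T_i)$, and one wants to use this odd cycle to split $T_i$ into two (or more) trees, producing an odd $K_{s+1}$-model and contradicting maximality of $s$. The obstacle is bookkeeping: each new piece must retain a monochromatic edge to every other $T_j$, so the split has to keep all of the at most $s-1$ ``connector'' vertices of $T_i$ on one side while still certifying the other side's connections; handling this cleanly may require strengthening the extremal choice (for instance also minimising $\sum_i\e(G[V(T_i)])$, or choosing the trees to be induced) and a careful local exchange along the odd cycle. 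Modulo this step the bound $\alpha(G)\ge\v(G)/(2t)$ follows, the extra factor $2$ over the Duchet--Meyniel bound $\v(G)/(2h(G)-1)$ being exactly the cost of each branch set being bipartite rather than merely connected.
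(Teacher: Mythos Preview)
This theorem is quoted in the paper as a result of Kawarabayashi and Song~\cite{KawSong07}; the paper does not reproduce a proof, so there is nothing here to compare your argument against directly. That said, your attempt has a genuine gap, and it is exactly the one you flag yourself.

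Your steps up through ``some $T_i$ has $|V(T_i)|\ge \v(G)/(t-1)$'' are fine. The difficulty is that you need one colour class of $c|_{V(T_i)}$ to be independent \emph{in $G$}, not just in the tree $T_i$, and your extremal conditions (maximize $s$, then $\sum_i|V(T_i)|$) do not force this. Your proposed repair --- use a monochromatic edge inside $V(T_i)$ to split $T_i$ and produce an odd $K_{s+1}$-model --- does not go through: when $T_i$ is split into two pieces, each piece must retain a monochromatic cross-edge to \emph{every} other $T_j$, and there is no reason both pieces should. You describe this as ``bookkeeping'', but it is a real obstruction, not a matter of tidier accounting. The same obstruction blocks the natural alternative of requiring each $G[V(T_i)]$ to be bipartite from the outset: with that constraint the coverage step fails, since adjoining an uncovered neighbour $v$ to $T_i$ may destroy bipartiteness, and taking $\{v\}$ as a new tree instead requires $v$ to have monochromatic edges to \emph{all} $T_j$, which it need not have.

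In short, the Duchet--Meyniel template gets you the covering partition into at most $t-1$ connected pieces, but extracting a large independent set from the big piece in the odd setting needs an idea that your maximality conditions do not supply. The argument in \cite{KawSong07} handles this with a more delicate construction of the branch sets; as written, your proposal stops at precisely the step that carries the content of the theorem.
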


\cref{t:OddAlpha} implies the following bound on the chromatic number of graphs with no odd $K_t$ minor.

\begin{cor}\label{c:big}
 	Let $G$ be a graph with no odd $K_{t}$ minor.  Then \begin{equation}\label{e:big}
 	\chi(G) \leq 2t \left(1+\log\left(\frac{\v(G)}{t}\right) \right).
 	\end{equation}
\end{cor}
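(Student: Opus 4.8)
The plan is to derive \cref{e:big} from \cref{t:OddAlpha} by the standard greedy procedure of repeatedly deleting a maximum independent set, so that a fixed fraction of the remaining vertices is removed at each step. The only preliminary observation is that the hypothesis is hereditary: if $G$ has no odd $K_t$ minor, then neither does any subgraph of $G$, so \cref{t:OddAlpha} applies not merely to $G$ but to every subgraph that arises during the procedure. (It suffices to treat the case $\v(G) \ge t$, as this is the only regime in which \cref{e:big} is used; when $\v(G) < t$ one simply has $\chi(G) \le \v(G)$.)

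Concretely, I would set $G_0 := G$ and, as long as $G_i$ is non-null, let $I_i$ be a maximum independent set of $G_i$ and put $G_{i+1} := G_i - I_i$ (once $G_i$ is null we may just keep $G_{i+1} := G_i$). By \cref{t:OddAlpha} we have $|I_i| = \alpha(G_i) \ge \v(G_i)/(2t)$, and therefore
\[
\v(G_{i+1}) \;\le\; \v(G_i)\Bigl(1 - \frac{1}{2t}\Bigr) \;\le\; \v(G_i)\, e^{-1/(2t)},
\]
whence $\v(G_i) \le \v(G)\, e^{-i/(2t)}$ for every $i \ge 0$. Moreover, assigning $j$ distinct colors to the classes $I_0, I_1, \dots, I_{j-1}$ and a disjoint palette of $\chi(G_j)$ colors to $G_j$ yields a proper coloring of $G$, so $\chi(G) \le j + \chi(G_j) \le j + \v(G_j)$ for every $j \ge 0$.

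It remains to choose $j$ well. Taking $m := \bigl\lceil 2t\log(\v(G)/t)\bigr\rceil$, which is nonnegative since $\v(G) \ge t$, we get $\v(G_m) \le \v(G)\, e^{-m/(2t)} \le \v(G) \cdot \frac{t}{\v(G)} = t$, and hence
\[
\chi(G) \;\le\; m + \v(G_m) \;\le\; 2t\log\!\Bigl(\frac{\v(G)}{t}\Bigr) + 1 + t \;\le\; 2t\Bigl(1 + \log\!\Bigl(\frac{\v(G)}{t}\Bigr)\Bigr),
\]
where the final inequality uses $1 + t \le 2t$. I do not expect any genuinely difficult step: the substance of the corollary is entirely contained in \cref{t:OddAlpha}, and the remaining work is the routine bookkeeping above, the only points needing a moment's care being the elementary estimates $1 - x \le e^{-x}$ and $\lceil x \rceil \le x + 1$ and the choice of $m$ that drives the vertex count down to at most $t$.
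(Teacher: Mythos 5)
Your proof is correct and follows essentially the same argument as the paper: repeatedly extract a maximum independent set, use \cref{t:OddAlpha} to shrink the remainder by a factor $1-1/(2t)$ per step, stop after $\lceil 2t\log(\v(G)/t)\rceil$ steps when at most $t$ vertices remain, and add the two color counts. Your explicit remark that the bound is only meaningful (and only used) when $\v(G)\ge t$ is a small point the paper leaves implicit, but otherwise the two proofs coincide.
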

 
\begin{proof}
 	By \cref{t:OddAlpha},  for every integer $s \geq 1$, there exist pairwise disjoint, independent subsets $X_1,X_2, \ldots,X_s \subseteq V(G)$ such that \[|V(G) - \cup_{i=1}^{s}X_i| \leq (1-1/(2t))^s\cdot \v(G).\]
 	Let $s=\lceil 2t \cdot \log(\v(G)/t) \rceil$. Then  $(1-1/(2t))^s\cdot \v(G) \leq t$.   It follows that  $\chi(G\setminus\cup_{i=1}^{s}X_i) \leq t$ and so  $$\chi(G) \leq  \chi(G[\cup_{i=1}^{s}X_i]) +\chi(G\setminus\cup_{i=1}^{s}X_i) \leq s+t,$$ as desired.	
\end{proof}

The third ingredient is~\cref{t:minorfrompieces} on the existence of  bipartite $K_{t}$ minors in weakly $l$-connected graphs.  The proof of~\cref{t:minorfrompieces} is more involved and will be given   in~\cref{s:build}.

\begin{thm}\label{t:minorfrompieces} There exists a constant $C=C_{\ref{t:minorfrompieces}}$ satisfying the following. 
	Let $t,l,r \geq 2$ be integers such that  $r \geq \sqrt{\log t}/2$ and $l = C t(\log t)^{1/4}$. Let $G$ be a weakly $l$-connected graph. If 
	there exist pairwise disjoint sets $X_1,X_2,\ldots,X_{r} \subseteq V(G)$ such that $\d(G[X_i]) \geq l$ for every   $i \in [r]$, and $\chi(G \setminus \cup_{i\in [r]} X_i) \geq l$,  then $G$ has a bipartite $K_t$ minor. 
\end{thm}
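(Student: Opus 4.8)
The plan is to build a bipartite $K_t$ minor in $G$ by first extracting, from each dense set $X_i$ and from the highly chromatic remainder, a large *bipartite* piece with good connectivity-like properties, and then linking these pieces together using the weak $l$-connectivity of $G$. This mirrors the structure of the $K_t$-minor argument of~\cite{NorSong19,Pos19}, but everything must be done bipartitely, and the "pieces" must be linked with many disjoint paths of controlled parity.

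First I would address the dense sets $X_i$. Since $\d(G[X_i]) \ge l$ and $l = \Theta(t(\log t)^{1/4})$, the graph $G[X_i]$ is dense enough that, after passing to a bipartite subgraph (losing at most a factor $1/2$ in the density — or, better, extracting a subgraph of min-degree $\ge l/2$ and then a large bipartite subgraph of it), we obtain a bipartite subgraph $B_i \subseteq G[X_i]$ whose density is still of order $t(\log t)^{1/4}$. By the $K_t$-minor density theorem of Kostochka–Thomason (\cref{t:KT}), a density of order $t\sqrt{\log t}$ forces a $K_t$ minor; here we only have density $t(\log t)^{1/4}$, so we cannot directly conclude. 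Instead, following the method of~\cite{NorSong19,Pos19}, the right target is a *small bipartite piece* — a bipartite subgraph on $O(t(\log t)^{1/4+o(1)})$ vertices that is, in the appropriate sense, "$t$-linked into the rest of $G$" — which when combined across the $r \ge \tfrac12\sqrt{\log t}$ copies yields the full $K_t$ minor. Each $B_i$ supplies such a piece; bipartiteness of $B_i$ ensures the sub-minor it produces is bipartite.

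Next I would handle the set $Y := V(G) \setminus \bigcup_{i} X_i$ with $\chi(G[Y]) \ge l$. High chromatic number gives (by a standard argument, e.g.\ passing to a subgraph of minimum degree $\ge l-1$) a subgraph of large minimum degree, hence after a further bipartite split a bipartite subgraph of min-degree $\ge (l-1)/2 = \Theta(t(\log t)^{1/4})$; alternatively one keeps the chromatic bound and invokes the depth-first-search / density machinery directly on a bipartite subgraph obtained by two-coloring a large chromatic subgraph. The point is that $G[Y]$ also provides one more "bipartite reservoir" from which many vertex-disjoint paths of prescribed endpoints can be routed. Then the linking step: because $G$ is weakly $l$-connected and $l \approx C t(\log t)^{1/4}$ exceeds the number of connector paths we need (roughly $O(t)$ for the clique plus lower-order terms from the pieces), any small separation $(A,B)$ has $\min(|A-B|,|B-A|) < |A\cap B| \le l$, which forces the graph to behave, for the purpose of routing a bounded-width family of disjoint paths between large sets, like a genuinely $l$-connected graph — this is exactly why the weak-connectivity notion was introduced, and it is what lets us connect the $r$ bipartite pieces together into a single bipartite $K_t$-expansion. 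Parity is preserved throughout because every piece is bipartite and, with the two-sided bipartition classes available, one has the freedom to adjust path lengths by $\pm 1$ through the reservoirs so that all connecting paths respect a global $2$-coloring.

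**The hard part will be** the linking step under the weakened hypothesis: unlike genuine $l$-connectivity, weak $l$-connectivity does not directly give $l$ disjoint paths between two prescribed $l$-sets, so one must argue that a violating separation $(A,B)$ with a small "thin side" $|A-B| < |A\cap B|$ can be absorbed — either by rerouting through $A\cap B$, or by observing that the small side can simply be deleted without destroying the dense sets $X_i$ or the chromatic remainder (which is where one uses that there are $r$ of them and that $\chi(G[Y])$ has slack). Making this absorption compatible with maintaining bipartiteness of the whole expansion, and with the precise counting of how many connector paths the pieces from~\cite{Pos19} demand, is the technical core; I expect the bulk of \cref{s:build} to be devoted to a careful bipartite analogue of the "piece assembly" lemma, with the weak-connectivity hypothesis doing the work that $l$-connectivity did in the non-odd case.
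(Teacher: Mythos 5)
Your overall architecture is right and matches the paper's: pass to bipartite subgraphs of the dense sets $X_i$ (via Erd\H{o}s plus Mader, i.e.\ \cref{c:connect}), extract a highly connected bipartite ``hub'' from the chromatic remainder, route disjoint paths between the pieces and the hub using weak $l$-connectivity via Menger, and assemble a $K_{xy}$-expansion Thomason-style with $y\approx(\log t)^{1/4}$, $x\approx t/y$, using the $\binom{y}{2}$ pieces to realize the edges between distinct ``columns.'' But there is a genuine gap at the step you dispose of in one sentence: ``one has the freedom to adjust path lengths by $\pm 1$ through the reservoirs so that all connecting paths respect a global $2$-coloring.'' This freedom does not exist for free. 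All of your pieces and your hub are bipartite by construction, and inside a bipartite environment the parity of a path between two fixed vertices is determined; you cannot toggle it. To make the assembled expansion bipartite you must be able to \emph{prescribe} the parity of each connecting tree/path, and that requires genuine odd structure near the hub. The paper devotes a substantial block of the proof to exactly this: it first proves (claim $(*)$) that $G_0$ contains $8k$ vertex-disjoint parity-breaking paths for the hub $H_0$ --- using the Erd\H{o}s--P\'osa property for odd $S$-paths (\cref{t:parity1}) and a weak-connectivity argument showing that otherwise a small deletion set would leave a large bipartite block together with a second block of huge chromatic number, which weak connectivity forces to be too small --- and then feeds these parity-breaking paths into the Kawarabayashi--Reed parity-linkage machinery (\cref{l:KawReed}, \cref{c:parityknitted}) to get a ``parity-knitted'' set of attachment vertices in $H_0$. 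Note also that this is where the hypothesis $\chi(G\setminus\cup_i X_i)\ge l$ is really earned: high chromatic number (not just density) of the remainder is what rules out the ``essentially bipartite'' outcome and guarantees the odd paths. Your proposal uses the remainder only as a routing reservoir and never sources the odd structure, so as written the construction could output a non-bipartite expansion, or fail to certify bipartiteness.

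A secondary, smaller inaccuracy: you suggest handling a violating separation by ``absorbing'' or deleting its thin side, using slack in $r$ and in $\chi(G[Y])$. The paper does not do this and does not need to: the Menger obstruction to the required linkage has order less than $xy(y-1)\le 4k<l$, while both the hub and each piece $H_{\{i,j\}}$ have many more than $l$ vertices, so both sides of the separation are large and weak $l$-connectivity is contradicted outright. The multiplicity $r\ge\binom{y}{2}$ is used to populate the Thomason grid, not for absorption. This part of your plan would likely still work, but it complicates the argument unnecessarily; the real technical core is the parity step above, together with the rerouting of through-paths inside each piece via \cref{l:rooted}.
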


Our fourth tool is a bound from~\cite{GGRSV08} on the density sufficient to force a bipartite $K_{t}$ minor. Note  that  Kostochka~\cite{Kostochka82}  proved that  every graph $G$  with $\d(G) \geq 3.2 s \sqrt{\log s}$ has a $K_s$ minor.

\begin{thm}[\cite{GGRSV08}]\label{t:BipartiteFromDensity}  
	Every graph $G$ with $\d(G) \geq 7t \sqrt{\log t}$  contains a bipartite $K_{t}$ minor.
\end{thm}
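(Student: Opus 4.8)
The plan is to reduce the statement to the Kostochka--Thomason density bound for ordinary $K_t$ minors --- specifically to the fact (Kostochka \cite{Kostochka82}, quoted just above) that every graph $G'$ with $\d(G') \ge 3.2\, t\sqrt{\log t}$ has a $K_t$ minor --- by first replacing $G$ with a large bipartite spanning subgraph. Concretely, I would take a maximum cut $(P,Q)$ of $G$ and let $G'$ be the spanning subgraph of $G$ with edge set $\{uv \in E(G) : u \in P,\ v\in Q\}$. Maximality of the cut gives $\e(G') \ge \e(G)/2$, and since $V(G') = V(G)$ we get $\d(G') = \e(G')/\v(G') \ge \d(G)/2 \ge \tfrac{7}{2}\, t\sqrt{\log t} \ge 3.2\, t\sqrt{\log t}$. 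Hence $G'$ contains a $K_t$ minor, i.e.\ there is a $K_t$-expansion $\eta$ in $G'$.

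The key point is that any such $\eta$ is automatically a \emph{bipartite} $K_t$-expansion. By definition $\cup\eta$ is a subgraph of $G'$: its vertex set is $\bigcup_{v\in V(K_t)} V(\eta(v))$, and its edges are only the tree edges of the nodes $\eta(v)$ together with the branch edges $\eta(e)$, all of which lie in $E(G')$. Since $G'$ is bipartite, so is every subgraph of it; in particular $\cup\eta$ is bipartite. Thus $\eta$ is a bipartite $K_t$-expansion in $G'$, and as $G'$ is a subgraph of $G$, the same $\eta$ witnesses that $G$ has a bipartite $K_t$ minor.

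I do not anticipate a genuine obstacle: the argument is just ``pass to a maximum-cut subgraph, then invoke Kostochka'', and the constant $7$ in the hypothesis is calibrated precisely so that halving the edge count still beats Kostochka's threshold (since $7/2 = 3.5 > 3.2$). The only points needing a moment's care are that the maximum-cut operation leaves the vertex set --- hence the denominator in the density --- unchanged (so that the density, not merely the edge count, drops by at most a factor of $2$), and the degenerate small cases ($t \le 2$, where $7t\sqrt{\log t}$ already forces an edge, whose two endpoints form a bipartite $K_t$ minor); alternatively one simply applies the Kostochka bound uniformly over all $t \ge 1$ and no case analysis is required.
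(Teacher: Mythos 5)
Your proof is correct. Note that the paper does not actually prove \cref{t:BipartiteFromDensity}; it is quoted from~\cite{GGRSV08} as a black box. Your derivation is the natural self-contained one, and it uses precisely the two ingredients the paper already has on the table: the Erd\H{o}s maximum-cut observation that every graph has a bipartite (spanning) subgraph with at least half the edges --- the same step the authors use in the proof of \cref{c:connect} --- and the Kostochka density bound $\d \geq 3.2\, s\sqrt{\log s}$ quoted in the sentence immediately preceding the theorem. The two points that need care are exactly the ones you flag: the cut is taken on the full vertex set so the density (not just the edge count) drops by at most a factor of $2$, and $7/2 = 3.5 > 3.2$, so the constant works out. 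The observation that a $K_t$-expansion found inside a bipartite subgraph is automatically a bipartite expansion in the sense of this paper (since $\cup\eta$ is a subgraph of that bipartite subgraph) is also right, and is the only place where the paper's specific definition of ``bipartite minor'' enters. The degenerate cases $t \leq 2$ are handled as you say. In short: correct, and essentially the intended argument behind the cited result.
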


Finally, we use the result of Postle~\cite{Pos19}, which is an improvement of a similar theorem from~\cite{NorSong19}.

\begin{thm}[\cite{Pos19}] \label{t:newforced} For every $\delta > 0$ there exists $C=C_{\ref{t:newforced}}(\delta) > 0$ such that for every $D > 0$ the following holds. Let $G$ be a graph with $\d(G) \ge  C$, and let $s=D/\d(G)$.  Then $G$ contains  either 
\begin{enumerate}
\item[(i)]   a minor $J$ with $\d(J) \geq D$, or
		\item[(ii)]    a subgraph $H$ with $\v(H) \leq s^{1+\delta} CD$ and $\d(H) \geq s^{-\delta}\d(G)/C$.
	\end{enumerate}  
\end{thm}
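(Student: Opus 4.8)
The plan is to prove the statement by induction on $\v(G)$, at each step either exhibiting the dense minor of~(i) or passing to a strictly smaller subgraph on which the inductive hypothesis (for a possibly larger value of the parameter $s$) applies. First I would dispose of the easy cases: if $s\le 1$ then $D\le\d(G)$ and $J:=G$ witnesses~(i), so I may assume $s>1$ and $D>C$; by deleting vertices of degree below $d:=\d(G)$ I may assume $G$ has minimum degree at least $d$ (a subgraph witnessing~(ii) for this reduced graph also witnesses~(ii) for $G$, and a minor of the reduced graph is a minor of $G$); and if $\v(G)\le s^{1+\delta}CD$ then~(ii) holds with $H=G$. So the real work is the case $\v(G)>s^{1+\delta}CD$.

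Here the key device is a \emph{contraction step}: partition $V(G)$ greedily into connected \emph{clusters} $V_1,\dots,V_p$, each grown as a breadth-first-search tree from a still-uncovered vertex and closed off at a suitably chosen size $q$ (well below $\tfrac12 s^{1+\delta}CD$), so that — using the minimum-degree bound — all but one of the clusters have size exactly $q$ and $p=\Theta(\v(G)/q)$. Let $R$ be the multigraph on $[p]$ whose edge multiplicities record the numbers of $G$-edges between distinct clusters, and let $J$ be the simple graph underlying $R$, a minor of $G$. Then I would run the dichotomy: either (a) some union of at most $a_{\max}\approx s^{1+\delta}CD/q$ clusters induces in $G$ a subgraph of density at least $s^{-\delta}d/C$ — equivalently, $R$ has a sub-multigraph on at most $a_{\max}$ vertices of that density — in which case that union is a subgraph $H$ as required by~(ii), since it has at most $a_{\max}q\le s^{1+\delta}CD$ vertices (and if this $H$ is still large, recurse on it); or (b) no such dense piece exists, whence few $G$-edges lie inside clusters (otherwise a single cluster would be such a piece), so $R$ is edge-rich and, lacking any concentrated sub-multigraph, loses only a controlled factor of density when collapsed to the simple graph $J$, yielding $\d(J)\ge D$ and hence~(i).

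The hard part will be calibrating this so that the two alternatives are genuinely exhaustive, and this is where the precise exponents $1+\delta$ and $-\delta$ in~(ii) are spent: a single clustering is too rigid, because the density lost to edges internal to clusters and — more seriously — the density lost in passing from the multigraph $R$ to the simple graph $J$ cannot both be absorbed at once, so that naive estimates force $C$ to depend on $d$. The remedy I would pursue is to iterate the contraction step over several rounds, each round ending in~(i), in~(ii), or in a denser-but-still-large subgraph on which to repeat, while carefully tracking how the density degrades through the $s^{\pm\delta}$ factors and bounding the number of rounds by means of the induction on $\v(G)$ (equivalently, by a bounded potential such as the logarithm of the current density). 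Showing that this process terminates with the stated bounds — in particular that case~(ii) never requires a subgraph on more than $s^{1+\delta}CD$ vertices — is the technical core, and is precisely what Postle's refinement of the key lemma of~\cite{NorSong19} accomplishes.
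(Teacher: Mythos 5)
This statement is imported verbatim from Postle's paper \cite{Pos19}; the present paper gives no proof of it, so there is no internal argument to compare yours against. Judged on its own merits, your proposal is an outline rather than a proof, and the gap is exactly where you place it. The reductions at the start are fine (the case $s\le 1$, passing to a subgraph of minimum degree at least $\d(G)$, and the case $\v(G)\le s^{1+\delta}CD$ are all routine), but the entire content of the theorem lives in the step you describe as ``the dichotomy'': either some union of few clusters is dense, or the quotient simple graph $J$ has $\d(J)\ge D$. As stated, this dichotomy is false for a single clustering --- the density of the multigraph $R$ can be destroyed both by edges internal to clusters and, more fatally, by the collapse of parallel classes when passing to $J$, and nothing in your sketch controls the multiplicities. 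You acknowledge this and propose to fix it by iterating over rounds with a density-increment potential, but you never specify the increment, never show the two alternatives become exhaustive after finitely many rounds, and never derive the specific exponents $s^{1+\delta}$ and $s^{-\delta}$; instead you close by saying that this ``is precisely what Postle's refinement of the key lemma of~\cite{NorSong19} accomplishes.'' That sentence defers the technical core to the very result being proved, which makes the argument circular as a proof attempt.

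Concretely, what is missing is the mechanism that converts ``no small dense subgraph'' into ``controlled edge multiplicities in the quotient.'' In the Norin--Song/Postle line of argument this is done not by a bare BFS clustering but by first locating, inside a minimal counterexample, a small subgraph that is both dense and a good (vertex-)expander, and then exploiting expansion to build the dense minor with only a bounded multiplicity loss; the $\delta$-dependent exponents arise from balancing the size of that expander against the density one can afford to lose. Without some substitute for that step --- a quantitative lemma saying that a graph with no subgraph on at most $N$ vertices of density at least $d'$ admits a partition into clusters whose quotient has bounded multiplicity and density at least $D$ --- your case (b) cannot be completed, and no choice of $C$ depending only on $\delta$ will make the naive version of the dichotomy work.
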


In the remainder of this section we deduce   \cref{t:oddmain}. It
 follows immediately from  \cref{c:BipartiteToOdd} 
 and the following \cref{t:main2}. The proof of \cref{t:main2}   utilizes all the tools presented above.  

 \begin{thm}\label{t:main2} For every $\delta>0$  there exists $t_0=t_0(\delta)$ such that for all positive integers $t \geq t_0$,  every graph $G$ with neither a bipartite $K_{t}$ minor nor an odd $K_t$ minor satisfies $$\chi(G)< t(\log t)^{\frac14   + \delta}.$$
  \end{thm}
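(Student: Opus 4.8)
The plan is to argue by contradiction through a minimal counterexample. Suppose the statement fails, and let $G$ be a graph with neither a bipartite $K_t$ minor nor an odd $K_t$ minor and with $\chi(G) \ge t(\log t)^{1/4+\delta}$, chosen with $\v(G)$ minimum subject to this. Then $G$ is vertex-critical, so $\delta(G) \ge \chi(G)-1$, and every subgraph of $G$ still has neither kind of $K_t$ minor; hence it will suffice to locate a bipartite $K_t$ minor in some subgraph of $G$, contradicting the choice of $G$. I may assume $\delta < 1/4$, since for $\delta \ge 1/4$ the asserted bound already follows from \cref{t:GGRS} once $t$ is large. Throughout I set $l := C_{\ref{t:minorfrompieces}}\,t(\log t)^{1/4}$ and $r := \lceil \tfrac12\sqrt{\log t}\rceil$, the parameters appearing in \cref{t:minorfrompieces}.

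First I would reduce to a weakly $l$-connected graph. Since $(\log t)^{\delta}\to\infty$, we have $\chi(G) \ge 3l$ for $t$ large, so \cref{l:weakconnect} produces a weakly $l$-connected subgraph $H \subseteq G$ with $\chi(H) \ge \chi(G) - 3l \ge \tfrac12 t(\log t)^{1/4+\delta}$. Two immediate reductions then apply. If some subgraph of $H$ has density at least $7t\sqrt{\log t}$, then \cref{t:BipartiteFromDensity} yields a bipartite $K_t$ minor of $H$, a contradiction; so every subgraph of $H$ has density less than $7t\sqrt{\log t}$. And if $\v(H) \le t\exp((\log t)^{1/4})$, then \cref{c:big} gives $\chi(H) \le 2t(1+(\log t)^{1/4})$, which for $t$ large contradicts $\chi(H) \ge \tfrac12 t(\log t)^{1/4+\delta}$; so $\v(H) > t\exp((\log t)^{1/4})$, and in particular $H$ has many vertices.

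The heart of the argument is to establish the hypotheses of \cref{t:minorfrompieces} for $H$: to find pairwise disjoint $X_1,\dots,X_r \subseteq V(H)$ with $\d(H[X_i]) \ge l$ for each $i$ and $\chi(H \setminus \bigcup_i X_i) \ge l$; once this is done, \cref{t:minorfrompieces} delivers a bipartite $K_t$ minor of $H$ and we are finished. I would build the $X_i$ one at a time. Having chosen $X_1,\dots,X_{i-1}$, write $H_{i-1} := H \setminus (X_1\cup\dots\cup X_{i-1})$ and maintain the invariant that $\chi(H_{i-1})$ is still $\Omega(t(\log t)^{1/4+\delta})$, so that $H_{i-1}$ has a subgraph of density $\Omega(t(\log t)^{1/4+\delta})$. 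To that subgraph I apply Postle's density-increment theorem \cref{t:newforced} with a carefully chosen target density: if it returns a sufficiently dense minor, I expect, in combination with \cref{t:BipartiteFromDensity} and the weak $l$-connectivity of $H$, to already obtain a bipartite $K_t$ minor of $H$ and finish; if it returns a subgraph on far fewer vertices, I iterate \cref{t:newforced} a bounded (in $\delta$) number of times to reach a subgraph $W \subseteq H_{i-1}$ with $\v(W) \le t\cdot(\log t)^{O(1)}$ and $\d(W) \ge l$. Since $W \subseteq G$ has no odd $K_t$ minor, \cref{c:big} bounds $\chi(W)$; taking $X_i := V(W)$ one then has $\d(H[X_i]) \ge \d(W) \ge l$, and the aim is to choose the pieces so that deleting all $r$ of them costs little enough in chromatic number that $\chi(H \setminus \bigcup_i X_i) \ge l$ still holds, whereupon \cref{t:minorfrompieces} applies and the proof is complete.

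I expect the main obstacle to be precisely this last construction, and in particular the chromatic-number bookkeeping. Each of the $r = \Theta(\sqrt{\log t})$ pieces must contain at least $2l = \Theta(t(\log t)^{1/4})$ vertices (density at least $l$ forces this), so the crude estimate "deleting a piece loses one color per vertex" is hopeless; instead the pieces must be extracted \emph{chromatically cheaply}, that is, so that $H[\bigcup_i X_i]$ has chromatic number much smaller than $\chi(H)$ even though each $H[X_i]$ is dense. Reconciling this with the requirement $\d(H[X_i]) \ge l$ is where the interplay of weak connectivity, the density-increment dichotomy \cref{t:newforced}, the bipartite-density bound \cref{t:BipartiteFromDensity}, and the chromatic bound \cref{c:big} has to be orchestrated carefully; I expect this bookkeeping, rather than any single ingredient, to be the crux, and to be what dictates the threshold $t_0(\delta)$.
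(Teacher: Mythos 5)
Your overall strategy is the paper's: reduce to a weakly connected subgraph via \cref{l:weakconnect}, manufacture $r$ pairwise disjoint dense pieces by playing \cref{t:newforced} against \cref{t:BipartiteFromDensity}, and feed them to \cref{t:minorfrompieces}. But the step you yourself flag as the crux --- the chromatic bookkeeping --- is a genuine gap as written, and your plan for it does not close. You bound $\chi(W)$ for each piece separately via \cref{c:big}; since density at least $l$ forces each piece to have $\Omega(t(\log t)^{1/4})$ vertices, \cref{c:big} charges at least $2t$ (and up to $\Theta(t\log\log t)$) colors per piece, and summing over $r=\Theta(\sqrt{\log t})$ pieces costs at least $\Theta(t\sqrt{\log t})$ colors, which exceeds $\chi(H)=\Theta(t(\log t)^{1/4+\delta})$ for every $\delta<1/4$. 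So no invariant of the form ``$\chi(H_{i-1})$ stays $\Omega(t(\log t)^{1/4+\delta})$'' can survive $r$ such deletions if you account for them one piece at a time.

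The missing observation is that \cref{c:big} must be applied to the \emph{union} of the pieces rather than to each piece individually, because its bound is only logarithmic in the number of vertices. Your construction already guarantees $|X_i|\le t(\log t)^{O(1)}$ (the paper enforces $|X_i|\le t(\log t)^{3/4}$ explicitly), hence $|\bigcup_i X_i|\le t(\log t)^{O(1)}$ since $r\le\sqrt{\log t}$, and then \cref{c:big} gives $\chi\bigl(H[\bigcup_i X_i]\bigr)=O(t\log\log t)$, which is negligible against $\chi(H)$; deleting $\bigcup_i X_i$ therefore leaves chromatic number at least $l$ and \cref{t:minorfrompieces} applies. With that one line your argument goes through. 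Two smaller remarks: the paper replaces your explicit iteration by taking a \emph{maximal} disjoint collection of small dense sets and contradicting maximality when fewer than $r$ exist; and a single application of \cref{t:newforced} with $D=7t\sqrt{\log t}$ already yields a piece of the right size and density (there $s=O((\log t)^{1/4-\delta})$, so $s^{1+\delta}C_1D\le t(\log t)^{3/4}$ and $s^{-\delta}\d(H')/C_1\ge C_2\,t(\log t)^{1/4}$ for large $t$), so no iteration of \cref{t:newforced} is needed.
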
	
  
  \begin{proof} We may assume that $\delta < 1/4$.
  Let $C_1 = C_{\ref{t:newforced}}(\delta)$  and   $C_2=C_{\ref{t:minorfrompieces}}$. We choose $t_0 \gg \max\{C_1,C_2,1/\delta\}$ implicitly to satisfy the inequalities appearing throughout the proof.

  Let $t \geq t_0$ be an integer and  let $k = t(\log t)^{\frac14   + \delta}/6$.  Suppose for a contradiction that there exists  a graph $G$ with neither a bipartite $K_{t}$ minor nor an odd $K_t$ minor such that $\chi(G) \geq 6k$. By \cref{l:weakconnect},   $G$ contains a weakly $2k$-connected subgraph $H$ with $\chi(H) \geq 2k$. Our goal is to apply \cref{t:minorfrompieces} and \cref{t:newforced} to  $H$ to obtain a contradiction. 
   
  Choose a maximal collection $\{X_1,X_2,\ldots,X_{r'}\}$ of pairwise disjoint subsets of $V(H)$ such that $\d(H[X_i]) \geq C_2\cdot t  (\log t)^{1/4}$ and $|X_i| \leq t(\log t)^{3/4}$ for all $i\in[r']$. 
  Let $r= \lceil \sqrt{\log t}/2 \rceil$,   $r^*=\min\{r', r\}$,  and  $X = \cup_{i \in [r^*]}X_i$. Then $|X| \leq t (\log t)^{5/4}$.
  By \cref{c:big},  for sufficiently large $t$,   	\[\chi(H[X]) \leq 3t \cdot \log(\log t) \leq k-1.\] Thus $\chi(H \setminus X) \geq k+1$. 
If  $r'\ge r$, then  $\chi(H \setminus \cup_{i \in [r]}X_i) \geq k+1 \geq C_2\cdot t(\log t)^{1/4}$. By \cref{t:minorfrompieces} applied to $H$ and $\{X_1,X_2,\ldots,X_{r}\}$, we see that  $H$ has a bipartite $K_t$ minor, contrary  to the choice of $G$.
  Thus  $r'<r$. Then $X=  \cup_{i \in [r']}X_i$. \medskip
  
 Let $H'$ be a minimal subgraph of $H \setminus X$ with $\chi(H') \geq k+1$. Then the minimum degree of    $H'$  is  at least $k$, and so $\d(H') \geq k/2 = t(\log t)^{\frac14   + \delta}/12$.  
  Let $D= 7t \sqrt{\log t}$. We next apply \cref{t:newforced} to $
  D$ and $H'$. Note that  $H'$ has no  bipartite $K_{t}$ minor by the choice of $G$. Thus by \cref{t:BipartiteFromDensity},    $H'$ has no minor   $J$ with $\d(J) \geq D$.
  By \cref{t:newforced},  there must exist  $Z \subseteq V(H')$ such that  $|Z| \leq s^{1+\delta} C_1D$ and $\d(H[Z]) \geq s^{-\delta}\d(H')/C_1$, where $s=D/d(H') \leq 100(\log{t})^{1/4-\delta}$.  It is easy to check that,  for sufficiently large $t$, the above conditions yield that $\d(H[Z]) \geq  C_2\cdot t(\log t)^{1/4}$ and $|Z| \leq t(\log t)^{3/4}$. But then the collection  $\{X_1,X_2,\ldots,X_{r'},Z\}$ contradicts the maximality of $\{X_1,X_2,\ldots,X_{r'}\}$.
  \end{proof}

\section{Proof of Theorem~\ref{t:minorfrompieces}}\label{s:build}

The proof of \cref{t:minorfrompieces} is an adaptation of the proof of \cite[Lemma 3.3]{NorSong19}, which in turn is based on the ideas of Thomason~\cite{Thomason01}. In addition to some of the lemmas from the previous section we use an array of extra tools from the literature, which we now present. 

We first use a classical result of Mader,  which ensures  a highly-connected subgraph in a dense graph, to deduce a highly-connected bipartite subgraph in a graph with either high density or large chromatic number.

\begin{lem}[\cite{Mader72}]\label{l:connect}
	Every graph $G$ contains a subgraph $H$ such that $\kappa(H) \geq \d(G)/2$.
\end{lem}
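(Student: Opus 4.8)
This is a classical theorem of Mader, so the plan is to recall a version of its proof. It is convenient to argue the equivalent statement: for every integer $k\geq 1$, every graph $G$ with $\d(G)\geq 2k$ contains a subgraph $H$ with $\kappa(H)\geq k+1$; applying this with $k=\lfloor \d(G)/2\rfloor$ then yields $\kappa(H)\geq k+1>\d(G)/2$. For the induction to run cleanly I would actually prove the stronger assertion that such an $H$ can be chosen with $\d(H)>\d(G)-k$, so that the density budget is not lost when passing to subgraphs, and I would induct on $\v(G)$.

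The argument starts with a cleaning step: if $G$ has a vertex $v$ with $\deg_G(v)\leq \d(G)$, then a one-line computation gives $\d(G-v)=\frac{\e(G)-\deg_G(v)}{\v(G)-1}\geq \d(G)$, so the inductive hypothesis applied to the smaller graph $G-v$ already produces the desired $H\subseteq G-v\subseteq G$. Hence I may assume $\delta(G)>\d(G)\geq 2k$, i.e.\ $\delta(G)\geq 2k+1$. If in addition $G$ itself is $(k+1)$-connected, take $H=G$ and we are done with room to spare. So the substantive case is when $\delta(G)\geq 2k+1$ while $G$ has a separation $(V_1,V_2)$ of order $s\leq k$ with $V_1\setminus V_2\neq\emptyset\neq V_2\setminus V_1$; writing $S=V_1\cap V_2$, the min-degree bound forces $|V_1|,|V_2|\geq \delta(G)+1\geq 2k+2$, so $G[V_1]$ and $G[V_2]$ both have strictly fewer vertices than $G$, and the aim is to recurse into one of them.

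I expect the main obstacle to be exactly this separation case: passing from $G$ to $G[V_i]$ can destroy up to $s\,\delta(G)$ edges incident with the separator, and the bare inequality $\d(G)\geq 2k$ does not by itself force $\d(G[V_1])$ or $\d(G[V_2])$ to remain at least $2k$ — this is the point where Mader's argument genuinely does more than bookkeeping. The standard device, which I would follow, is to recurse not into $G[V_i]$ but into the graph obtained from $G[V_i]$ by turning $S$ into a clique: the extra $\binom{s}{2}$ edges compensate for what was lost across the separation and push the density of at least one side over the threshold, while a $(k+1)$-connected subgraph found in the augmented piece can be converted back into a genuine $(k+1)$-connected subgraph of $G$ — the artificial edges inside $S$ can be re-routed through the other side, since $|S|=s\leq k<k+1\leq \kappa$. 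Carrying this through, together with the density bookkeeping built into the strengthened inductive statement, is the technical heart of the proof; the remaining steps are routine arithmetic. (If one is content with the weaker conclusion $\kappa(H)\geq \d(G)/4$ — which still suffices for the applications once constants are tracked — the same outline goes through with ample slack and without the delicate extremal estimates.)
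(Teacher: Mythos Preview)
The paper does not prove this lemma: it is quoted from Mader~\cite{Mader72} and used as a black box, so there is no ``paper's proof'' to compare against. Evaluating your sketch on its own merits, the cleaning step and the reduction to a separation $(V_1,V_2)$ of order $s\le k$ with $\delta(G)\ge 2k+1$ are fine. The gap is in how you handle that separation. Adding a clique on the separator $S$ and recursing is a device suited to producing minors or linkages, not subgraphs: if the induction hands you a $(k+1)$-connected subgraph $H'$ of $G[V_1]+K_S$, the artificial edges of $H'$ inside $S$ are not edges of $G$, and ``re-routing through the other side'' replaces them by \emph{paths} in $G[V_2]$, which yields at best a subdivision, not a subgraph, of $H'$ inside $G$. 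Nothing you have established about $G[V_2]$ lets you absorb those paths into a genuine $(k+1)$-connected subgraph of $G$.

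The standard fix, which is Mader's own, avoids any graph modification: one takes a subgraph $H$ minimal subject to $\v(H)\ge 2k$ and $\e(H)>2k(\v(H)-k)$ (the original $G$ qualifies since $\e(G)\ge 2k\,\v(G)$), checks that $\delta(H)\ge 2k+1$, and then in the separation case simply counts
\[
\e(H)\le \e(H[V_1])+\e(H[V_2])\le 2k(|V_1|-k)+2k(|V_2|-k)=2k\bigl(\v(H)+s-2k\bigr)\le 2k(\v(H)-k),
\]
a contradiction. The key is the extremal invariant $\e(H)>2k(\v(H)-k)$ rather than the density bound $\d(H)>\d(G)-k$ you proposed; the $-2k^2$ slack is exactly what absorbs the double-counted separator without any augmentation.
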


\begin{cor}\label{c:connect} Every graph $G$ contains a bipartite subgraph $H$ such that 
		$$\kappa(H) \geq \max \{\d(G)/4,(\chi(G)-1)/8\}.$$
\end{cor}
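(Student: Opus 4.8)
The plan is to combine Mader's Lemma (\cref{l:connect}) with two standard reductions — first passing from chromatic number to density, and then passing from an arbitrary graph to a bipartite subgraph while losing only a constant factor in density. For the chromatic-number side, recall that a graph $G$ with $\chi(G) = c$ contains a subgraph of minimum degree at least $c-1$ (take a colour-critical subgraph, or iteratively delete vertices of degree $< c-1$), and such a subgraph has density at least $(c-1)/2$. Hence every graph $G$ contains a subgraph $G'$ with $\d(G') \geq \max\{\d(G)/1, (\chi(G)-1)/2\}$; more simply, $\d(G') \geq \max\{\d(G), (\chi(G)-1)/2\}$ after also taking $G' = G$ as an option. (We only need $\d(G') \geq (\chi(G)-1)/2$ from the chromatic side, together with $\d(G') \geq \d(G)$ trivially.)

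Next I would extract a bipartite subgraph. Any graph $G'$ has a bipartite subgraph $G''$ with $\e(G'') \geq \e(G')/2$ — take a maximum cut, or a random bipartition — and since $\v(G'') \leq \v(G')$ we get $\d(G'') \geq \d(G')/2$. Applying this to the subgraph $G'$ found above yields a bipartite subgraph $B$ of $G$ with
\[
\d(B) \;\geq\; \tfrac12 \max\Bigl\{\d(G),\ \tfrac{\chi(G)-1}{2}\Bigr\} \;=\; \max\Bigl\{\tfrac{\d(G)}{2},\ \tfrac{\chi(G)-1}{4}\Bigr\}.
\]
Finally, apply \cref{l:connect} to $B$: it contains a subgraph $H$ with $\kappa(H) \geq \d(B)/2$, and $H$ is bipartite as a subgraph of $B$. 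Combining the two displayed bounds gives $\kappa(H) \geq \max\{\d(G)/4, (\chi(G)-1)/8\}$, as required.

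There is no real obstacle here — the result is a routine chaining of well-known facts. The only point that needs a little care is the order of operations and bookkeeping of constants: one must take the bipartite subgraph of the dense/critical subgraph (not the other way around) so that the minimum-degree or density guarantee is preserved up to the factor $2$, and then feed the result into Mader's theorem, which costs another factor $2$. The factor $4$ on the density term and $8$ on the chromatic term fall out exactly from these two halvings (with the extra factor $2$ on the chromatic side coming from converting minimum degree $\chi(G)-1$ into density $(\chi(G)-1)/2$).
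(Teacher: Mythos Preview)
Your proposal is correct and follows essentially the same route as the paper: pass to a critical subgraph for the chromatic bound, halve the density by taking a bipartite subgraph (Erd\H{o}s), and then apply Mader's lemma (\cref{l:connect}). The paper treats the two terms in the $\max$ separately rather than unifying them through a single choice of $G'$, but the ingredients and constants are identical.
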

\begin{proof}
	By a well-known result of Erd\H{o}s~\cite{Erdos65}, $G$ contains a bipartite subgraph $G'$ with $\d(G') \geq \d(G)/2$. By \cref{l:connect},  $G'$ contains a subgraph $H$ with $\kappa(H) \geq \d(G)/4$. 
	
	Next, let $k=\chi(G)$, and let $G''$ be a minimal subgraph of $G$ such that $\chi(G'')=k$. Then   the  minimum degree of $G''$ is at least $k-1$  and so $\d(G'') \geq (k-1)/2$. As shown in the previous paragraph $G''$ contains a bipartite subgraph $H$ such that 
	$\kappa(H) \geq (k-1)/8$, as desired.  
\end{proof}

A large part of the proof of \cref{t:minorfrompieces} involves linking together small bipartite clique-expansions, which we find  in each  $G[X_i] $   given in the theorem. We now present the terminology and tools needed to accomplish this. 

Let $l$  be a positive integer and let $\mc{S} =(\{s_i,t_i\})_{i \in [l]}$ be a sequence of pairwise disjoint pairs of vertices of a graph $G$, except possibly $s_i=t_i$ for some $i\in[l]$.    \emph{An $\mc{S}$-linkage $\mc{P}$} in $G$ is a sequence  $(P_i)_{i \in [l]}
$ of vertex-disjoint paths in $G$ such that $P_i$ has ends $s_i$ and $t_i$ for every $i \in [l]$. For an $\mc{S}$-linkage $\mc{P}$, let $I$ be the set of all $i \in [l]$ such that $P_i$ has an odd number of edges. Then we say that 
$\mc{P}$ is \emph{an $(\mc{S}, I)$-parity linkage}.  Let $S\subseteq V(G)$ with $|S|=2l$.   We say that  $S$ with $|S|=2l$ is \emph{linked} in   $G$ if for every ordered partition $\mc{S} =(\{s_i,t_i\})_{i \in [l]}$ of $S$ into pairs there exists an $\mc{S}$-linkage in $G$; and  $S$ is \emph{parity-linked} if, in addition,     for every $I \subseteq [l]$ there exists an $(\mc{S},I)$-parity linkage in $G$.   Finally,  a graph $G$ with $|V(G)| \geq 2l$ is \emph{$l$-linked} if every set $S \subseteq V(G)$ with $|S|=2l$ is linked in $G$.

Our second tool is the following theorem of Thomas and Wollan~\cite{ThoWol05}, which improves an earlier result of Bollob\'as and Thomason~\cite{BolTho96}.

\begin{thm}[\cite{ThoWol05}]\label{t:linked}
	For every integer $l \geq 1$,  every graph $G$ with $\kappa(G) \geq 10l$ is $l$-linked.
\end{thm}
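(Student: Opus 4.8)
The plan is to reduce \cref{t:linked} to an edge-density version of the linkage theorem and then outline the extremal argument that proves the latter. Since $\kappa(G)\ge 10l$ forces $\delta(G)\ge 10l$, and hence $\d(G)\ge 5l$, and since in particular $\kappa(G)\ge 2l$, it suffices to prove that every $2l$-connected graph $G$ with $\d(G)\ge 5l$ is $l$-linked; call this statement $(\star)$. (This is where the constant $10$ comes from: an improvement of the constant $5$ in $(\star)$ would improve the connectivity bound in \cref{t:linked}, and $(\star)$ is precisely the point on which Thomas and Wollan improve over the earlier argument of Bollob\'as and Thomason.)

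To prove $(\star)$ I would proceed in two steps. \textbf{Step A: from a large clique minor to a linkage.} I would first establish the standard fact that every $2l$-connected graph $G'$ containing a $K_{3l}$ minor is $l$-linked. Fix branch sets $B_1,\dots,B_{3l}$ of the minor and a set $S=\{s_1,t_1,\dots,s_l,t_l\}$. Using $2l$-connectivity together with Menger's theorem, there are $2l$ pairwise vertex-disjoint paths from the $2l$ vertices of $S$ to $\bigcup_j B_j$, each meeting $\bigcup_j B_j$ only in its last vertex; a routine redistribution along edges of the minor — here the slack between the $3l$ branch sets and the $2l$ endpoints is used — arranges that these $2l$ endpoints lie in $2l$ \emph{distinct} branch sets. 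Since any two branch sets are joined by an edge of $G'$ and each branch set induces a connected subgraph, one then completes, for each pair $\{s_i,t_i\}$, a path through the two distinct branch sets receiving its two endpoints; these $l$ completions are pairwise disjoint, so concatenating them with the routing paths gives an $\mc S$-linkage.

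\textbf{Step B: a minimal counterexample to $(\star)$ contains a $K_{3l}$ minor.} Suppose $(\star)$ fails, and let $(G,S)$ be a counterexample with $\v(G)$, and then $\e(G)$, minimum, where $S$ is a set of $2l$ terminals that cannot be linked. For an edge $e=uv$ with $u,v\notin S$, contracting $e$ destroys the edge $e$, exactly $|N(u)\cap N(v)|$ further edges, and one vertex, and a linkage of the image of $S$ in $G/e$ would lift through $e$ (the disjoint paths meet the contracted vertex at most once). Minimality of $(G,S)$ therefore forces, for every such edge $e$, that either \textbf{(i)} $G/e$ fails to be $2l$-connected, or \textbf{(ii)} $|N(u)\cap N(v)|$ is large (at least about $5l$, so that $\d(G/e)<5l$). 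In case (ii), enough such edges — their number controlled via a discharging argument against $\d(G)\ge 5l$ — yield a set of vertices with heavily overlapping neighbourhoods, out of which a $K_{3l}$ minor is extracted by greedily contracting short connecting pieces. Case (i) produces a vertex cut of $G$ of order exactly $2l$ using both ends of $e$; such minimum-order cuts are handled by a separate induction, splitting $G$ into two smaller pieces, adding a clique on the cut to each piece, and applying the minimality hypothesis to the pieces to produce — after gluing — either the required linkage or the clique minor. Either way one reaches a contradiction.

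\textbf{The main obstacle} is Step B. The density bound $\d(G)\ge 5l$ is far below the Kostochka--Thomason threshold $\Theta\!\left(l\sqrt{\log l}\right)$ of \cref{t:KT} needed to force a $K_{3l}$ minor in an arbitrary graph, so the clique minor cannot be read off from global density; it must be squeezed out of the \emph{local} structure that minimality and $2l$-connectivity impose on a would-be counterexample. Making the two alternatives — loss of connectivity versus loss of density under a single edge contraction — interact correctly, and tuning the internal constants so that the threshold comes out as exactly $5l$, and hence $10l$ in \cref{t:linked}, is the delicate heart of the argument; this refinement is the main contribution of Thomas and Wollan over Bollob\'as and Thomason.
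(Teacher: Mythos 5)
This statement is quoted from Thomas and Wollan \cite{ThoWol05}; the paper gives no proof of it, so your proposal can only be judged against the actual argument in that reference. Your reduction to the density statement $(\star)$ is correct and is exactly how \cite{ThoWol05} is organized ($\kappa(G)\ge 10l$ gives $\delta(G)\ge 10l$, hence $\d(G)\ge 5l$ and $\kappa(G)\ge 2l$), and your Step A is the classical Robertson--Seymour fact that a $2l$-connected graph with a $K_{3l}$ minor is $l$-linked; that part is fine in outline.

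The genuine gap is Step B, and you have in fact put your finger on it yourself without resolving it. A minimal counterexample to $(\star)$ has density only $5l$, and by \cref{t:KT} a density of order $l\sqrt{\log l}$ is in general required to force a $K_{3l}$ minor; minimality cannot manufacture a clique minor that the graph simply does not contain, since every minor of $G$ is a minor of $G$. Your proposed mechanism --- that non-contractible edges $uv$ have $|N(u)\cap N(v)|\gtrsim 5l$ and that these overlapping neighbourhoods can be assembled greedily into $3l$ pairwise adjacent branch sets --- does not work: each such edge only certifies a set of about $5l$ vertices with many edges to $\{u,v\}$, and there is no argument that $3l$ such local configurations can be chosen pairwise adjacent and pairwise disjoint. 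This is precisely why Thomas and Wollan do \emph{not} route their proof through a complete minor. Their improvement over Bollob\'as and Thomason consists in replacing the $K_{3l}$ minor in the Robertson--Seymour linking step by a much weaker object (a small, dense minor in which any $2l$ prescribed branch sets can already be paired up internally), which \emph{can} be extracted at linear density by taking a minor that is minimal subject to an edge-count inequality; the terminals are then attached to it by Menger's theorem as in your Step A. Without that substitute for the clique minor, your Step B, and hence the whole proof of $(\star)$, does not go through.
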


 Kawarabayashi and Reed~\cite{KawReed09} extended~\cref{t:linked} to parity linkages. They proved that for  every graph $G$ with $\kappa(G) \geq 50l$,  either there exists $X \subseteq V(G)$ such that $|X| < 4l-3$ and $G \setminus X$ is bipartite, or every set $S \subseteq V(G)$ with $|S|=2l$ is parity-linked in $G$. We need a variant of their result for weakly connected graphs. Fortunately, we are able to reuse most of the ingredients of the proof from~\cite{KawReed09}. One of these ingredients is the ``Erd\H{o}s-Pos\'a property" for odd $S$-paths.

\begin{thm}[\cite{CGGGLS06,GGRSV08}]\label{t:parity1} Let $k \geq 1$ be an integer. For any   set $S $ of vertices of a graph $G$, either 
	\begin{enumerate} 
		\item[(i)] there are  $k$ vertex-disjoint paths  each of which has 
	an odd number of edges and both its ends in $S$, or
	\item[(ii)] there exists $X \subseteq V(G)$ with  $|X| \leq 2k-2$ such that $G \setminus X$ contains
	no such path.
	\end{enumerate}
\end{thm}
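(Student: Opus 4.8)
The plan is to obtain \cref{t:parity1} as the $\mathbb{Z}/2\mathbb{Z}$-specialization of the theorem of Chudnovsky, Geelen, Gerards, Goddyn, Lohman and Seymour~\cite{CGGGLS06} on packing non-zero $A$-paths in group-labelled graphs. Label every edge of $G$ by the non-identity element of $\mathbb{Z}/2\mathbb{Z}$; then the label of a walk equals its length modulo $2$, so a path with both ends in $S$ is \emph{non-zero} precisely when it has an odd number of edges. (One may restrict attention to such paths with no internal vertex in $S$, since any odd $S$-path contains one --- shorten at its first internal $S$-vertex, keeping the half of odd length.) The cited theorem asserts that for every integer $k\ge 1$ either the labelled graph contains $k$ vertex-disjoint non-zero $S$-paths, or there is $X\subseteq V(G)$ with $|X|\le 2k-2$ meeting every non-zero $S$-path; reading this off in our special case gives exactly the dichotomy of \cref{t:parity1}. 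The instance we actually need is also treated self-containedly in~\cite{GGRSV08}.

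For a genuinely self-contained proof I would argue by induction, most cleanly on $\nu := \nu(G,S)$, the maximum number of vertex-disjoint odd $S$-paths, together with $|E(G)|$, and prove the sharper statement that there is a hitting set $X$ with $|X|\le 2\nu$ (which, applied with $k=\nu+1$, yields the displayed bound). First I would perform the standard cleanups: delete every edge and isolated vertex lying on no odd $S$-path, and suppress vertices of degree $2$ outside $S$ by merging their two incident edges while adding their $\mathbb{Z}/2\mathbb{Z}$-labels; I may also assume $G$ is connected and $|V(G)\cap S|\ge 2$, since otherwise $\nu=0$ and $X=\emptyset$ works. The base case $\nu=0$ being trivial, assume $\nu\ge1$ and fix a maximum family $\mathcal P$ of $\nu$ vertex-disjoint odd $S$-paths.

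The heart of the matter is an alternating-walk analysis relative to $\mathcal P$, in the spirit of the proof of Gallai's theorem on vertex-disjoint $S$-paths and of matching theory: either some vertex (or pair of vertices) is hit by every maximum family, in which case deleting it lowers $\nu$ and one closes the induction with the budget $2(\nu-1)+2=2\nu$; or no such vertex exists, and a Gallai--Edmonds-type decomposition of the alternating-reachability structure produces a hitting set of size exactly $2\nu$, with at most two vertices charged to each path of $\mathcal P$. The parity constraint is carried through by tracking the accumulated $\mathbb{Z}/2\mathbb{Z}$-label along alternating walks, and this is where the bookkeeping is delicate: an alternating walk may flip parity each time it switches between a packing path and the rest of the graph, so one must control both its endpoints and its label simultaneously. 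Extracting the tight bound $2k-2$ (rather than merely $O(k)$) is the main obstacle, and it is precisely what the careful group-labelled induction of~\cite{CGGGLS06} is designed to handle; in practice I would invoke that theorem rather than reproduce the decomposition.
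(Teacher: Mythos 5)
The paper gives no proof of this statement --- it is quoted directly from \cite{CGGGLS06,GGRSV08} --- and your proposal correctly identifies the source and the right reduction: take the $\mathbb{Z}/2\mathbb{Z}$-labelling in which every edge carries the non-identity element, so that non-zero $S$-paths are exactly the odd $S$-paths, and the $2k-2$ bound of the non-zero $A$-paths theorem specializes to the stated dichotomy. Your appended sketch of a from-scratch alternating-walk argument is too vague to stand on its own, but since you explicitly fall back on invoking \cite{CGGGLS06}, your route coincides with the paper's.
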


Let $G$ be a graph, and let $H$ be a bipartite subgraph of $G$. We say that a path $P$   in $G$ is a \emph{parity-breaking path for $H$}, if the ends of $P$ are in $V(H)$, $P$ is otherwise vertex-disjoint from $H$, and $H \cup P$ contains an odd cycle. Note that such a parity-breaking path may have only one edge.  
For a graph $G$ and  $X,Y \subseteq V(G)$,  we say that $X$ is \emph{joined to $Y$ in $G$} if there exist $|X|$ vertex-disjoint paths in $G$ each of which has   one end in $X$ and the other in $Y$.    We need the following lemma  which  is a consequence of the result of Kawarabayshi and Reed~\cite[Theorem 1.2]{KawReed09}.
 
\begin{lem}[\cite{KawReed09}]\label{l:KawReed}  Let $H$ be a $2k$-linked bipartite subgraph of a graph $G$. Suppose that $G$ contains $2k$ vertex-disjoint parity-breaking paths for $H$. Then every set $X \subseteq V(G)$ with $|X| = 2k$ that is joined to $V(H)$ in $G$ is parity-linked in $G$.
\end{lem}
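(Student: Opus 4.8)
The plan is to verify directly the definition of ``parity-linked'': fix an arbitrary ordered partition $\mc S=(\{s_i,t_i\})_{i\in[k]}$ of $X$ into pairs and an arbitrary $I\subseteq[k]$, and produce an $(\mc S,I)$-parity linkage in $G$ (so that $P_i$ is odd precisely for $i\in I$). Such a linkage will be assembled from three pieces --- the $2k$ vertex-disjoint paths witnessing that $X$ is joined to $V(H)$, a linkage inside the $2k$-linked bipartite graph $H$, and a few of the given parity-breaking paths used as parity-correcting detours --- and this assembly is the way one deduces the lemma from \cite[Theorem~1.2]{KawReed09}.

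First I would fix $2k$ vertex-disjoint paths $R_x$ $(x\in X)$ in $G$, where $R_x$ has one end $x$ and meets $V(H)$ exactly in its other end $\rho(x)$; thus $\rho$ is an injection from $X$ into $V(H)$. Since $H$ is $2k$-linked and $k\ge 1$ it is connected, and being bipartite it has a proper $2$-coloring $c$ with the property that every path of $H$ between two vertices $u,v$ has odd length precisely when $c(u)\neq c(v)$. Consequently the parity of the ``direct'' route from $s_i$ to $t_i$ --- namely $R_{s_i}$, followed by any $H$-path from $\rho(s_i)$ to $\rho(t_i)$, followed by $R_{t_i}$ reversed --- does not depend on the choice of its middle part; let $J\subseteq[k]$ be the set of indices $i$ for which this forced parity is not the one required (odd when $i\in I$, even otherwise). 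Note $|J|\le k$.

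For $i\notin J$ the path $P_i$ is taken to be such a direct route, while for $i\in J$ it is rerouted through one parity-breaking path. The point is that if $Q$ is a parity-breaking path for $H$ with ends $u,w\in V(H)$, then replacing an $H$-path from $\rho(s_i)$ to $\rho(t_i)$ by the concatenation of an $H$-path from $\rho(s_i)$ to $u$, then $Q$, then an $H$-path from $w$ to $\rho(t_i)$, changes the overall parity by exactly one, because the cycle formed by $Q$ together with any $H$-path between $u$ and $w$ is odd. Hence, after assigning to each $i\in J$ a distinct parity-breaking path $Q_{\sigma(i)}$ with ends $u_{\sigma(i)},w_{\sigma(i)}$, it suffices to find inside $H$ a vertex-disjoint linkage realizing the $k-|J|$ pairs $\{\rho(s_i),\rho(t_i)\}$ for $i\notin J$ together with the $2|J|$ pairs $\{\rho(s_i),u_{\sigma(i)}\}$ and $\{w_{\sigma(i)},\rho(t_i)\}$ for $i\in J$. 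This amounts to $k+|J|\le 2k$ pairs on at most $4k$ vertices of $H$, so the $2k$-linkedness of $H$ yields the linkage, and concatenating it with the paths $R_x$ and the chosen parity-breaking paths produces the required $(\mc S,I)$-parity linkage, with all parities correct by construction.

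The step I expect to be the main obstacle is making the $2k$ paths $R_x$ and the $|J|\le k$ chosen parity-breaking paths pairwise vertex-disjoint while keeping the at most $4k$ endpoints they determine in $V(H)$ genuinely distinct: a priori, the parity-breaking paths supplied by the hypothesis may run through the interiors of the paths $R_x$ or share endpoints with the vertices $\rho(x)$. Reconciling the two path systems is precisely the content we need from the proof in \cite{KawReed09}, and I would follow it: using the high connectivity of $H$ implied by its being $2k$-linked, together with Menger's theorem and the Erd\H{o}s--P\'osa property for odd paths (\cref{t:parity1}), one either reroutes to reach the desired disjoint configuration or extracts a bounded-size vertex set meeting every parity-breaking path for $H$, contradicting the assumed existence of $2k$ vertex-disjoint ones. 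Everything else is bookkeeping with parities in the bipartite graph $H$.
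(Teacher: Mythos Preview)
The paper does not supply its own proof of \cref{l:KawReed}; the lemma is quoted as a consequence of \cite[Theorem~1.2]{KawReed09} and used as a black box. There is therefore nothing in the present paper to compare your argument against.

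That said, your sketch is the natural reduction and is sound where it is explicit: feed $X$ into $H$ along the $2k$ joining paths $R_x$, link the resulting $k+|J|\le 2k$ terminal pairs inside $H$ using its $2k$-linkedness, and flip each wrong parity by detouring through one parity-breaking path. Your parity computation is correct (in a bipartite $H$ the parity of any $u$--$w$ path is fixed, and a parity-breaking path $Q$ has $|Q|+d_H(u,w)$ odd, so the detour flips the overall parity by exactly one). You also correctly isolate the one nontrivial step --- making the $R_x$ and the selected parity-breaking paths pairwise internally disjoint with all endpoints in $V(H)$ distinct --- and defer it to the argument in \cite{KawReed09}, which is precisely what the paper itself does for the whole lemma.
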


We say that a set $X $ of vertices of a graph $G$  is \emph{parity-knitted} if for every pair of partitions $(A,B)$ and $(X_1,X_2,\ldots,X_r)$  of $X$,  there exist   pairwise vertex-disjoint trees $T_1, T_2,\ldots,T_r$ in $G$  such that $X_i \subseteq V(T_i)$  and $(A \cap X_i,B \cap X_i)$ extends to the bipartition of $T_i$    for every $i \in [r]$.  A \emph{linkage} in a graph $G$ is a collection of pairwise vertex-disjoint paths.

\begin{cor}\label{c:parityknitted}
	Let $H$ be a $2k$-linked bipartite subgraph of a graph $G$. Suppose that $G$ contains $2k$ vertex-disjoint parity-breaking paths for $H$. Then every set $X \subseteq V(G)$ with $|X| = k$ that is joined to $V(H)$ in $G$ is parity-knitted in $G$.  
\end{cor}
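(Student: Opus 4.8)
The plan is to upgrade \cref{l:KawReed} from a statement about parity-\emph{linkages} to one about parity-\emph{knitted} sets by a ``splitting'' argument: a tree spanning a small terminal set with a prescribed bipartition will be assembled from a bipartite \emph{backbone} living inside $H$ together with pairwise disjoint paths attaching the terminals to the backbone, and all of these attaching paths will be produced at once as a single parity-linkage coming from \cref{l:KawReed}. (This is essentially forced: a linkage can only realize a structure in which every terminal has degree one, so it cannot by itself realize a tree on three or more terminals; the branching must be carried by a pre-existing bipartite backbone, for which $H$ is the natural source, and the two extra vertices per pair in the passage from $k$ to $2k$ are exactly what let us attach terminals to that backbone.)

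In detail, fix $X\subseteq V(G)$ with $|X|=k$ joined to $V(H)$, a bipartition $(A,B)$ of $X$, and a partition $(X_1,\dots,X_r)$ of $X$; parts of size at most $1$ are handled trivially (a single vertex, resp.\ the empty tree), so assume $|X_i|\ge 2$ for all $i$. Fix a bipartition $(H_1,H_2)$ of $H$. First I would choose, for each $v\in X$, a distinct \emph{port} $p_v\in H_1$, and for each $i\in[r]$ a subtree $\hat T_i$ of $H$ containing $\{p_v:v\in X_i\}$, with the trees $\hat T_i$ pairwise vertex-disjoint and disjoint from $X\cap V(H)$; this is possible because $H$ is $2k$-linked and hence highly connected and of large minimum degree, and because a parity-breaking path meets $V(H)$ only in its two endpoints, so altogether only $O(k)$ vertices of $H$ have to be avoided, which lets one take the $\hat T_i$ small (e.g.\ stars). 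Then I set $P=\{p_v:v\in X\}$ and $S=X\cup P$; we have $|S|=2k$, and $S$ is joined to $V(H)$ — the ports by trivial paths, and $X$ by the paths witnessing that $X$ is joined to $V(H)$ — so \cref{l:KawReed} applies to $S$. Applying it to the pairing $\{\{v,p_v\}:v\in X\}$ and to $I=\{v\in X: v\in B\}$ produces pairwise vertex-disjoint paths $\{R_v\}_{v\in X}$ with $R_v$ joining $v$ to $p_v$ and $R_v$ of odd length exactly when $v\in B$. Setting $T_i=\hat T_i\cup\bigcup_{v\in X_i}R_v$, these are pairwise vertex-disjoint trees with $X_i\subseteq V(T_i)$; and since the bipartition of $\hat T_i\subseteq H$ is the restriction of $(H_1,H_2)$ — so every port lies on the $H_1$-side — while $v$ sits at the far end of a path of length $\equiv[v\in B]$ from $p_v\in H_1$, declaring the $H_1$-side of $T_i$ to be the ``$A$-side'' shows that the bipartition of $T_i$ restricts on $X_i$ to exactly $(A\cap X_i,\,B\cap X_i)$, as required.

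The one genuinely delicate point, which I expect to be the main obstacle, is the disjointness between the parity-linkage paths $R_v$, which a priori may pass through arbitrary vertices of $G$, and the interiors of the backbones $\hat T_i$. I would handle this by reserving the ports and backbones inside $H$ first and then applying \cref{l:KawReed} not to $G$ but to $G$ with the backbone-interiors removed; the content of the argument is then to check that this smaller graph still satisfies the hypotheses of \cref{l:KawReed} — it still contains $H$ minus a small reserved set (which remains sufficiently linked once the backbones are taken small enough), and it still contains $2k$ vertex-disjoint parity-breaking paths for $H$, using again that parity-breaking paths meet $V(H)$ only at their endpoints so they can be chosen at the outset to avoid the reserved vertices. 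Once this bookkeeping is in place, the rest — tracking path parities and checking that the pieces fit together into the required trees — is routine.
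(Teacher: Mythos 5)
Your overall strategy is the paper's: build small backbone trees inside $H$, enlarge $X$ to a $2k$-element set by adding attachment points in $H$, and invoke \cref{l:KawReed} to produce a parity-linkage joining each $x\in X_i$ to its backbone with a prescribed parity. However, the one step you yourself flag as delicate is exactly where your write-up has a genuine gap. Because you insist that all ports $p_v$ lie on one side $H_1$ of the bipartition, and $H_1$ is independent in the bipartite graph $H$, each backbone $\hat T_i$ must contain non-port vertices (e.g.\ star centers); these are not terminals of the linkage, so the paths $R_v$ may run through them. Your proposed repair---delete the backbone interiors and apply \cref{l:KawReed} to the smaller graph---does not follow from the hypotheses: the statement only assumes $H$ is $2k$-linked, which is not preserved under vertex deletion (it yields $\kappa(H)\ge 4k-1$, and after removing up to $r\le k$ centers you retain only roughly $3k$-connectivity, far below the $20k$ needed to recover $2k$-linkedness via \cref{t:linked}). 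So the claim that $H$ minus the reserved set ``remains sufficiently linked'' is unjustified, and the application of \cref{l:KawReed} to the pruned pair $(G',H')$ is not licensed.

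The paper's proof removes the difficulty rather than confronting it: choose each backbone $T_i'$ in $H\setminus Y$ (where $Y$ is the set of attachment points of the linkage witnessing that $X$ is joined to $V(H)$) with \emph{exactly} $|X_i|$ vertices, and pair each $x\in X_i$ with a distinct vertex of $T_i'$. Then $X'=X\cup\bigcup_i V(T_i')$ has size exactly $2k$, every backbone vertex is a terminal of the linkage, and since the paths of an $\mc{S}$-linkage are pairwise vertex-disjoint they are automatically internally disjoint from all backbones---no pruning of $G$ or $H$ is needed. The price is that the target vertices of $T_i'$ need not all lie in $H_1$, but this costs nothing: one simply chooses the parity of each path individually, according to which side of $H$'s bipartition its backbone endpoint lies on and whether its $X$-endpoint is in $A$ or $B$. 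If you drop the ``all ports in $H_1$'' normalization and adopt this per-path parity choice, your argument becomes the paper's and the delicate point disappears.
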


\begin{proof} Let $(A,B)$ and $(X_1,X_2,\ldots,X_r)$ be two partitions of $X$. 
	As $X$ is joined to $V(H)$ in $G$,  there exists  a linkage  $\mc{P}$ in $G$ such that $|\mc{P}|=k$, and every path in $  \mc{P}$ has one end in $X$, the other   in $V(H)$, and is otherwise vertex-disjoint from $ H$. 
	Let $Y=\{V(P)\cap V(H)| P\in\mc{P}\}$.  Then $|Y|=k$.    Note that the minimum degree of $H$ is  at least  $4k-1$ because $H$ is $2k$-linked. It follows that  we can greedily find pairwise vertex-disjoint trees $T'_1,T'_2,\ldots T'_r$ in $H \setminus Y$ such that $\v(T'_i)=|X_i|$ for each $i \in [r]$. Let $X' = X \cup (\cup_{i \in r} V(T'_i))$. Then $|X'|=2|X|=2k$ and $X'$ is joined to $V(H)$ in $G$.  By \cref{l:KawReed}, $X'$ is parity-linked in $G$.
Thus  we can find pairwise  vertex-disjoint linkages $\mc{P}_1,\mc{P}_2,\ldots,\mc{P}_r$ in $G$ such that for each $i\in[r]$, $|\mc{P}_i|=|X_i|$;    every  path in $\mc{P}_i$ has one end in $X_i$ and    the other   in $V(T'_i)$,  and is otherwise  disjoint from $X'$;   and in addition,  by choosing  the desired parity of each path in $\mc{P}_i$, the partition  $(A \cap X_i,B \cap X_i)$ extends to the bipartition of the tree $T_i =  T'_i \cup (\cup_{P \in \mc{P}_i}P)$, as desired. 	
\end{proof}

Finally, we need a lemma from~\cite{NorSong19}.
 
\begin{lem}[\cite{NorSong19}]\label{l:rooted} There exists a constant $C=C_{\ref{l:rooted}} >0$ satisfying the following. 
	Let $G$ be a graph,  let $m, s \geq 2$ be positive integers.  Let $s_1,\ldots, s_m,$ $t_1,\ldots,t_m,$  $r_1,\ldots, r_s \in V(G)$ be distinct, except possibly $s_i=t_i$ for some $i \in [m]$.
	If $\kappa(G) \geq C\cdot  \max\{m, s\sqrt{\log s}\}$,  	
	then there exists a $K_{s}$-expansion $\eta$ in $G$ rooted at $\{r_1,\ldots,r_s\}$ and an $(\{s_i,t_i\})_{i \in [m]}$-linkage $\mc{P}$ in $G$ such that $\cup\eta$ and $\mc{P}$ are vertex-disjoint. 
\end{lem}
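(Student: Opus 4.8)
The plan is to realise the clique $K_s$ on a clique-minor ``fabric'' already present in $G$, to hook the roots $r_1,\dots,r_s$ onto distinct branch-sets of that fabric by vertex-disjoint paths, and to route the $s_i$--$t_i$ pairs at the same time, all via a single application of the Thomas--Wollan linkage theorem (\cref{t:linked}). Put $T=\{r_1,\dots,r_s\}\cup\{s_1,t_1,\dots,s_m,t_m\}$, so $|T|\le s+2m\le 3\max\{m,s\}\le\frac{3}{C}\kappa(G)$. Since $\delta(G)\ge\kappa(G)$ we have $\d(G-T)\ge\frac12(\kappa(G)-|T|)\ge\frac14\kappa(G)$, which for $C$ large is comfortably above the density forcing a $K_s$-minor in \cref{t:KT}. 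The first --- and crucial --- step is to extract such a minor \emph{economically}: I would invoke a quantitative form of \cref{t:KT} (present in the proofs of Kostochka and Thomason) to obtain a $K_s$-expansion $\nu$ in $G-T$ with $|V(\cup\nu)|\le\frac12\kappa(G)$.

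\textbf{The construction.} Given $\nu$, pick one port $y_i\in V(\nu(i))$ for each $i\in[s]$ and let $G'=G-(V(\cup\nu)\setminus\{y_1,\dots,y_s\})$; then $T\subseteq V(G')$ and $\kappa(G')\ge\kappa(G)-(|V(\cup\nu)|-s)\ge\frac12\kappa(G)\ge 10(s+m)$ for $C$ large, so $G'$ is $(s+m)$-linked by \cref{t:linked}. (Pairs with $s_i=t_i$ are dealt with by first deleting those few vertices.) Applying this to the $s+m$ pairs $(r_1,y_1),\dots,(r_s,y_s),(s_1,t_1),\dots,(s_m,t_m)$ yields pairwise vertex-disjoint paths $Q_1,\dots,Q_s,P_1,\dots,P_m$ in $G'$. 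The only vertices of $\cup\nu$ surviving in $G'$ are the ports, so by disjointness $Q_i\cap V(\cup\nu)=\{y_i\}$ and each $P_k$ misses $\cup\nu$ entirely. Setting $\eta(i)=\nu(i)\cup Q_i$ (a tree, since $Q_i$ meets $\nu(i)$ only at $y_i$) and $\eta(e)=\nu(e)$ for each edge $e$ of $K_s$, one checks that the $\eta(i)$ are pairwise disjoint and pairwise joined by the edges $\nu(e)$, that $r_i\in V(\eta(i))$ while no other root lies there, and that $\mc P=(P_1,\dots,P_m)$ is an $\mc S$-linkage vertex-disjoint from $\cup\eta$. Choosing $C$ large enough to absorb all the constants finishes the proof.

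\textbf{The main obstacle.} The step I expect to be the real difficulty is the very first one. ``Economically'' means the fabric must have $O(s\sqrt{\log s})$ vertices --- matching the Kostochka--Thomason density threshold up to a constant --- for otherwise deleting $\cup\nu$ eats the entire connectivity reserve that the linkage needs. This is unproblematic when $G$ is dense, but for graphs on very many vertices a $K_s$-minor can be forced to be large (a graph of girth $\Omega(\log s)$ needs $\Omega(s^2)$ vertices for any $K_s$-minor), so the fabric cannot simply be removed. In that regime one must instead take a somewhat larger fabric, keep it \emph{inside} $G$, and allow the linkage to run through its unused branch-sets, exploiting that after contracting the branch-sets they form a clique, so every linkage path can be shortcut to meet the fabric in at most two vertices; alternatively, one chooses the rooted minor so that $G$ minus its vertex set stays highly connected. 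Making this bookkeeping close, so that connectivity $C\max\{m,s\sqrt{\log s}\}$ suffices rather than the $C\max\{m,s^2\}$ that a carefree argument would demand, is where the work lies.
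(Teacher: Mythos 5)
Note first that the paper does not prove \cref{l:rooted}; it is imported verbatim from~\cite{NorSong19}, so there is no in-paper argument to compare against. Judged on its own terms, your proposal gets the easy half right: granted a $K_s$-expansion $\nu$ in $G-T$ with $|V(\cup\nu)|\le\kappa(G)/2$, the passage to $G'$, the application of \cref{t:linked}, and the assembly of $\eta(i)=\nu(i)\cup Q_i$ are all correct (including the checks that each $Q_i$ meets $\cup\nu$ only in $y_i$ and that the expansion is $\{r_1,\dots,r_s\}$-rooted). The problem is that the one step carrying all the weight --- the ``economical'' $K_s$-expansion --- is false as stated, and you do not repair it. In a $d$-regular graph of girth $g$, every triangle of a $K_s$-expansion lifts to a cycle of length at least $g$ through three branch sets, and averaging over the $\binom{s}{3}$ triangles shows every $K_s$-expansion has at least roughly $sg/3$ vertices. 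Taking $d=Cs\sqrt{\log s}$ and an expander-type $d$-regular graph of girth $g\gg C\sqrt{\log s}$ (such graphs exist with connectivity $\Theta(d)$), every $K_s$-expansion has far more than $\kappa(G)/2$ vertices, so the fabric cannot be deleted and $G'$ need not retain any connectivity. You correctly flag this as ``the main obstacle,'' but flagging it is not the same as overcoming it: the lemma's entire content is the coexistence of the expansion with the linkage, and that is precisely what remains unproved.

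The two fixes you sketch do not obviously close the gap either. Routing the linkage through the unused branch sets of a larger fabric requires a clique expansion with $\Omega(s+m)$ branch sets; but when $m$ is much larger than $s\sqrt{\log s}$, the hypothesis only gives $\kappa(G)\ge Cm$, and by \cref{t:KT} (which is tight) a graph of density $Cm$ need only contain $K_w$ minors for $w=O(m/\sqrt{\log m})=o(m)$ --- too few branch sets to absorb $m$ linkage paths. So the $(s_i,t_i)$-linkage cannot in general be threaded through the clique minor, and must be kept away from it by some other mechanism; your alternative suggestion (``choose the rooted minor so that $G$ minus its vertex set stays highly connected'') is exactly the statement that needs proof, not a proof. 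To make this a complete argument you would need to either establish a genuinely quantitative ``small/avoidable minor'' lemma or find and argue the simultaneous construction of the expansion and the linkage; as it stands the proposal reduces \cref{l:rooted} to an unproved claim of comparable difficulty.
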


We are  now ready to prove \cref{t:minorfrompieces} by building a bipartite $K_t$ minor   from the pieces constructed in each $G[X_i]$.  

\begin{proof}[Proof of \cref{t:minorfrompieces}] 
	We  show that the theorem holds for $C_{\ref{t:minorfrompieces}}=\max\{2000,48C_{\ref{l:rooted}}\}$.

Let $k = t(\log t)^{1/4}$. Then $l \geq \max\{2000,48C_{\ref{l:rooted}}\}\cdot k$.
By \cref{l:weakconnect},   $G \setminus \cup_{i\in [r]} X_i$ contains a  weakly $(l/3)$-connected subgraph $G_0$  with    $\chi(G_0) \geq l/3$. By  \cref{c:connect} and the choice of $C$,  $G_0$ contains a bipartite subgraph $H_0$ with $\kappa(H_0) \geq (l-3)/24>80k$.   We next show  that \medskip

  \noindent ($*$) $G_0$ contains at least $8k$ vertex-disjoint parity-breaking paths for $H_0$.
 \medskip

Suppose ($*$) is not true. Let $(A_0,B_0)$ be a bipartition of $H_0$. Then $ |A_0|\ge \kappa(H_0) > 80k$.  Note that every path in $G_0$ with an odd number of edges and both its ends in $A_0$ contains   a parity-breaking path for $H_0$. Thus $G_0$ does not contain $8k$   vertex-disjoint paths each of which has an odd number of edges and both its ends in $A_0$.  By \cref{t:parity1} applied to $G_0$ and $A_0$,  there exists $X \subseteq V(G_0)$ with $|X| \le 16k-2$ such that $G_0 \setminus X$ contains no such path.  As  $\kappa(H_0) > 80k$ and $\chi(G_0 \setminus X) \geq \chi(G_0) - |X| \geq l/4$,  it follows that   the block of $G_0 \setminus X$ containing $H_0 \setminus X$ is bipartite, and $G_0 \setminus X$ contains a block $J$ with  $\chi(J) \geq l/4$.   Now consider a proper separation $(A_1,B_1)$ of $G_0$ with  $V(H_0) \subseteq A_1$ and  $V(J) \subseteq B_1$  such that  $|A_1\cap B_1|\le |X|+1 < 16k<l/3$. Then $|A_1-B_1|\ge \v(H_0) -|A_1\cap B_1|  > 16k$.   Since $G_0$ is  weakly $(l/3)$-connected, we have  $|B_1-A_1|< |A_1\cap B_1|$ and so  $\v(J) \leq |B_1| = |A_1\cap B_1|+|B_1-A_1| <32k$.   But then  $\chi(J) < 32k < l/4$, a contradiction. This proves  ($*$).  \medskip

Let $y = \lfloor (\log t)^{1/4} \rfloor$ and  $x  = \lceil t/ y \rceil$. Assume first that  $y \leq 1$. Then $G[X_1]$ contains a bipartite $K_t$ minor by \cref{t:BipartiteFromDensity}, as desired. Assume next that $y \geq 2$.  Then $r \geq \binom{y}{2}$, $xy \geq t$, and $xy(y-1) \leq 4k$.  It suffices to show that $G$ contains a bipartite $K_{xy}$-expansion. 

 By  \cref{c:connect},  $G[X_i]$ contains a bipartite subgraph $H_i$    with $\kappa(H_i) \geq l/4$ for each $i \in [r]$.      Let $\mc{H}=\{H_1,H_2, \ldots,H_{\binom{y}{2}}\}$. We relabel the graphs in $\mc{H}$ to $\{H_{\{i,j\}}\}_{\{i,j\} \subseteq [y]}$.   
We   claim that there exist pairwise vertex-disjoint linkages $\mc{Q}_{(i,j)}$ for all $i,j \in [y]$ with $i \neq j$, such that each $\mc{Q}_{(i,j)}$ consists of $x$ vertex-disjoint paths $Q^{1}_{(i,j)}, \ldots, Q^{x}_{(i,j)} $ each of which starts in $V(H_{\{i,j\}})$, ends in $V(H_0)$, and is otherwise vertex-disjoint from $ H_{\{i,j\}} $. Suppose not. By Menger's theorem  there exists a   proper  separation $(A,B)$ of $G$  with $|A\cap B|<xy(y-1) \leq 4k<l$ such that $V(H_0) \subseteq A$ and $V(H) \subseteq B$ for some $H \in \mc{H}$. But then \[\min\{|A-B|, |B-A|\}>|A\cap B|,\] contrary to  the fact that   $G$ is weakly $l$-connected.

Let $\mc{Q} = \cup_{i,j \in [y], i \neq j} \mc{Q}_{(i,j)}$.   We now apply \cref{l:rooted} consecutively to each   subgraph $H_{\{i,j\}}$ with $s=2x$  and $m \leq xy(y-1)-2x$ equal to the number of paths in $\mc{Q} - (\mc{Q}_{(i,j)} \cup \mc{Q}_{(j,i)})$   which are not vertex-disjoint from $H_{\{i,j\}}$.  The vertices $\{(s_i,t_i)\}_{i \in [m]}$ are then chosen to be the first and last vertices of these paths in $H_{\{i,j\}}$, while the vertices $r^1_{(i,j)}, \ldots,r^x_{(i,j)}$ and   $r^1_{(j,i)}, \ldots,r^x_{(j,i)}$ are the ends of the paths in  $\mc{Q}_{(i,j)}$  and $ \mc{Q}_{(j,i)} $ in $H_{\{i,j\}}$, respectively. Note that  $\kappa(H_{\{i,j\}}) \geq l/4 >C_{\ref{l:rooted}}\cdot  \max\{m, s\sqrt{\log s}\} $  by the choice of $C$.  By using  the linkage $\mc{P}$ given by  \cref{l:rooted} to reroute the paths in $\mc{Q} - (\mc{Q}_{(i,j)} \cup \mc{Q}_{(j,i)})$ within $H_{\{i,j\}}$, we may assume that   $H_{\{i,j\}}$ contains a   bipartite  $K_{2x}$-expansion $\eta_{_{\{i,j\}}}$  rooted at $\{r^1_{(i,j)}, \ldots,r^x_{(i,j)}, r^1_{(j,i)}, \ldots,r^x_{(j,i)}\}$, which is  otherwise vertex-disjoint from $\mc{Q}$.  We may assume that $\eta_{_{\{i,j\}}}$ has domain $\{i,j\} \times [x]$ such that   for each $z \in [x]$, we have $r^{z}_{(i,j)} \in V(\eta_{_{\{i,j\}}}(i,z))$ and  $r^{z}_{(j,i)} \in V(\eta_{_{\{i,j\}}}(j,z))$.  

For $i,j \in [y]$ with  $i \neq j$ and $z \in [x]$,  let $s^{z}_{(i,j)}  $ be  the first vertex of $H_0$ encountered as we traverse $Q^{z}_{(i,j)}$ from the end $r^{z}_{(i,j)}$ in $H_{\{i,j\}}$; let $R^{z}_{(i,j)}$ be the subpath of $Q^{z}_{(i,j)}$ with ends $r^{z}_{(i,j)}$ and $s^{z}_{(i,j)}$; and let $S_i^z=\{s^{z}_{(i,j)}:  j \in [y]-\{i\}\}$.   Finally, let
\[ S=\bigcup_{i  \in [y],  z\in[x] }  S_i^z  \text{ and   } 
  H^{*}= \bigcup_{\{i, j\} \subseteq [y] }\left((\cup \eta_{_{\{i,j\}}})\cup \{R^{z}_{(i,j)}\cup R^{z}_{(j,i)}: z\in[x] \} \right).\] 
It is easy to see that   $(S_i^z)_{i \in [y], z \in [x]}$  partitions $S$, and   $H^*$ is  a bipartite subgraph of $G $  with  $V(H^*) \cap V(H_0) = S$. Let $(A^*, B^*)$ be a bipartition of $H^*$. 
Note that $|S|=  xy(y-1)\leq 4k$ and $S$ is joined to $V(H_0)$ in $G_0$. By \cref{t:linked},    $H_0$ is   $8k$-linked because $  \kappa(H_0)>80k$.  Furthermore, by ($*$) and \cref{c:parityknitted}, $S$ is parity-knitted in $G_0$. Thus for the pair of partitions $(A^*\cap S, B^*\cap S)$ and $(S_i^z)_{i \in [y], z \in [x]}$ of $S$, there exists a collection  $\{T^z_i\}_{i \in [y], z \in [x]}$ of pairwise vertex-disjoint trees  in $G_0$  such that $S_i^z  \subseteq V(T^z_i)$,   and   $G^* =H^*\cup (\cup_{i \in [y],z \in [x]}T^z_i) $   is  a bipartite subgraph of $G$. 
 
It remains to show that $G^*$ contains a $K_{xy}$-expansion. We construct such an expansion $\eta$ with domain $[y] \times [x]$. 
  For $i \in [y]$ and $ z \in [x]$, let   
	 $$\eta(i,z)= T^z_i \cup \bigcup_{j \in [y]-\{i\}}\left(\eta_{_{\{i,j\}}}(i,z) \cup R^{z}_{(i,j)}\right).$$
It is not hard to see that $\{\eta(i,z)\}_{i \in [y], z \in [x]}$ is a collection of pairwise vertex-disjoint trees in $G^*$.  Moreover, for each pair of distinct  elements $(i,z)$ and $(i',z')$ in  the domain of $\eta$, $G^*$ contains an edge with one end in $V(\eta(i,z))$ and the other  in $V(\eta(i',z'))$. Indeed, if $i=i'$, then $z\ne z'$ and   for each  $j \in [y]-\{i\}$, the edge in $\eta_{_{\{i,j\}}}$ with one end in $V(\eta_{_{\{i, j\}}}(i,z))$ and the other  in $V(\eta_{_{\{i, j\}}}(i,z'))$ is such  an edge; and, if $i \neq i'$,  then the edge in $\eta_{_{\{i,i'\}}}$ with one end in $V(\eta_{_{\{i, i'\}}}(i,z))$ and the other  in $V(\eta_{_{\{i, i'\}}}(i',z'))$ is the  desired one.   \medskip

This completes the proof of \cref{t:minorfrompieces}. \end{proof}

\subsubsection*{Acknowledgement.} The research presented in this paper was partially conducted during the visit of the second author to the Institute of Basic Science in Daejeon,  Korea.     The second author  thanks    the  Institute of Basic Science  for  the  hospitality, and   Sang-il Oum for helpful discussion.

\bibliographystyle{alpha}
\bibliography{Zixia}

\end{document}